\documentclass[]{interact}

\usepackage{epstopdf}
\usepackage[caption=false]{subfig}

\usepackage[numbers,sort&compress]{natbib}
\bibpunct[, ]{[}{]}{,}{n}{,}{,}

\usepackage{hyperref}
\usepackage{amsmath}
\usepackage{amssymb}
\usepackage{mathtools}
\usepackage{amsthm}

\usepackage{booktabs}
\usepackage{diagbox}
\usepackage{algorithm}
\usepackage{algorithmic}

\theoremstyle{plain}
\newtheorem{theorem}{theorem}[section]
\newtheorem{lemma}[theorem]{Lemma}
\newtheorem{corollary}[theorem]{Corollary}

\theoremstyle{definition}
\newtheorem{definition}[theorem]{Definition}

\theoremstyle{remark}
\newtheorem{remark}{Remark}

\newtheorem{assumption}[theorem]{Assumption}

\newtheorem{proofpart}{Part}
\makeatletter
\@addtoreset{proofpart}{theorem}
\makeatother

\renewcommand{\epsilon}{\varepsilon}
\newcommand{\R}{\mathcal{R}}
\newcommand{\Rb}{\mathbb{R}}
\newcommand{\nR}{\nabla \mathcal{R}}

\begin{document}
	
	
	\title{Convergence of the Iterates for Momentum and RMSProp for Local Smooth Functions: Adaptation is the Key}
	
	\author{
		\name{Bilel Bensaid$^\ast$\textsuperscript{a} \thanks{$^\ast$ Email: bilel.bensaid@u-bordeaux.fr}, Ga\"el Po\"ette$\dagger$\thanks{$\dagger$Email: gael.poette@cea.fr} \textsuperscript{a} and Rodolphe Turpault$\ddagger$\textsuperscript{b}\thanks{$\ddagger$Email: rodolphe.turpault@u-bordeaux.fr}}
		\affil{\textsuperscript{a}CEA-CESTA/DAM, F-33114 Le Barp, France \\ \textsuperscript{b}Universit\'e de Bordeaux (CNRS-Bordeaux-INP), F33405 Talence, France}
	}
	
	\maketitle
	
	\begin{abstract}
		Both accelerated and adaptive gradient methods are among state of the art algorithms to train neural networks. The tuning of hyperparameters is needed to make them work efficiently. For classical gradient descent, a general and efficient way to adapt hyperparameters is the Armijo backtracking. The goal of this work is to generalize the Armijo linesearch to Momentum and RMSProp, two popular optimizers of this family, by means of stability theory of dynamical systems. We establish convergence results, under the Lojasiewicz assumption, for these strategies. As a direct result, we obtain the first guarantee on the convergence of the iterates for RMSProp, in the non-convex setting without the classical bounded assumptions.
	\end{abstract}
	
	\begin{keywords}
		Inertial algorithms, Adaptive Gradient Descent, Armijo Condition, Non-Convex Optimization, Neural Networks, KL-inequality
	\end{keywords}

	\begin{amscode}
		90C26, 68T07, 65L05, 65L20, 39A30
	\end{amscode}

\section{Introduction}	
\label{intro}

\paragraph*{Families of optimizers}
~~\\
Gradient Descent (GD) and its mini-batch version (SGD) are among the most popular algorithms to train neural networks nowadays. GD updates the parameters of the model $\theta_n$ in the opposite direction of the gradient with a small learning rate/step size $\eta>0$ to minimize a cost function $\R$:
\begin{equation}
	\theta_{n+1}=\theta_n-\eta\nR(\theta_n).
	\label{GD_update}
\end{equation}
This procedure suffers from two main drawbacks that have given birth to two new families of optimizers:
\begin{itemize}
	\item inertial algorithms. For $\mu$-strongly convex function \cite{Polyak, Nesterov_book}, the rate of convergence is suboptimal. This is why Polyak \cite{Polyak} suggested a new kind of algorithm known as Inertial optimizers. The idea is to use past gradients to update the parameters. One of the most popular inertial algorithm is Momentum that introduces a new intermediate sequence $(v_n)_{n\in \mathbb{N}}$ representing an exponential moving average of the gradients ($v_0=0$):
	\begin{equation}
		\left\{
		\begin{array}{ll}
			v_{n+1} = \beta v_n + \eta \nR(\theta_n),  \\
			\theta_{n+1} = \theta_n - \eta v_{n+1},
			\label{Momentum_update}
		\end{array}
		\right.
	\end{equation}
	where $0<\beta<1$ is a new hyperparameter measuring the trade-off between actual gradient and previous ones. 
	\item Adaptive GD algorithms. Since GD is hightly sensitive to the choice of $\eta$, it needs a fine tuning to achieve good performances. In this perspective, variants of GD called adaptive GD have become particularly popular because they adjust the learning rate using only the square coordinates of the past gradients: see AdaDelta \cite{Adadelta}, Adam \cite{Adam} and AdaGrad \cite{Adagrad}. The prototype of all these methods is well illustrated by the RMSProp procedure \cite{RMSProp}:
	\begin{equation}
		\left\{
		\begin{array}{ll}
			s_{n+1} = \beta s_n +(1-\beta)\nR(\theta_n )^2,  \\
			\theta_{n+1} = \theta_n - \eta \dfrac{\nR(\theta_n)}{\epsilon_a+\sqrt{s_{n+1}}}, 
		\end{array}
		\right.
		\label{RMSProp_update}
	\end{equation}
	where $\epsilon_a>0$ is usually a small parameter preventing division  by zero. Let us recall that the square and the division have to be understood as component-wise operations.   
\end{itemize}
The vast majority of ML optimizers (like Adam: default choice in many Machine Learning (ML) frameworks) may be seen as a mixture of Polyak/Nesterov acceleration technique and RMSProp scaling. This is why this work focuses on Momentum and RMSProp algorithms. Two main problems will be in the spotlight: the lack of guarantees for the convergence of iterates especially in the non-convex setting and the tuning of hyperparameters $\eta$, $\beta$ and $\epsilon_a$. 

\paragraph*{Momentum review}
~~\\
The theoretical guarantees for Momentum can be organized according to the hypothesis on $\R$:
\begin{itemize}
	\item If $\R$ is $\mu$-strongly convex and $L$-smooth, the function value $\left(\R(\theta_n)-\R(\theta^*)\right)_{n\in \mathbb{N}}$ ($\theta^*$ the global minimum of $\R$) converges as $(1-\sqrt{\mu/L})^n$ \cite{Polyak, Siegel} where GD converges as $(1-\mu/L)^n$ explaining the acceleration terminology. To obtain this improvement, one has to choose $\eta$ and $\beta$ given by a formulae depending on $L$ and $\mu$ that are unknown for a given problem.  
	\item If $\R$ is only convex and $L$-smooth, it is shown in \cite{Ghadimi} that the Cesaro average of the iterates converges. By adding the Polyak-Lojasiewicz (PL) assumption and the uniqueness of the minimum, F. Aujol \cite{Aujol_recap} retrieves the same acceleration rate as the strongly convex case for function values. By some restart techniques, it is possible to get rid of the unicity assumption \cite{aujol_non_unicity} and the knowledge of smoothness parameters \cite{aujol_free_fista}. 
	\item For non-convex functions if $\R$ is lower bounded and $L$-smooth, then the gradients converge to 0 \cite{momentum_gradient}. Like the GD optimizer \cite{Absil} it is needed to consider an additional hypothesis to expect convergence of the iterates, namely the Lojasiewicz or Kurdyka-Lojasiewicz (KL) property \cite{Lojasiewicz_gradient, Kurdyka}. This assumption is absolutely not restrictive for neural networks as it takes into account analytical, semi-analytical, semi-algebraic and more generally o-minimal functions \cite{Bolte_semi_analytic}. By assuming bounded iterates (or $\R$ coercive) and the \textbf{global $L$-smooth} condition, the authors in \cite{iPiano, diff_convexe, KL_taux} prove convergence of the iterates to a critical point of $\R$ and establish "abstract" convergence rates \textbf{without computing their dependence on the hyperparameters and the Lojasiewicz coefficients} (to compare it with GD like in the strongly convex case). In \cite{algebraic_momentum}, the coercivity and the global smoothness are replaced by the assumption that the continuous-time gradient trajectories of $\R$ are bounded, which is not easily verified by a general neural network.
	\item To be exhaustive, one can mention the recent analysis of the differential equation associated to Momentum \cite{HB_nonconvex_acceleration} under the PL condition. The convergence rate is explicit and reveals the same acceleration than in the convex case but $\beta$ should depend on $L$ and the PL coefficient. Other works with assumptions stronger than PL \cite{Momentum_averaged_out} or specific to relu networks \cite{Momentum_wide_relu,Nesterov_deep_relu} are valuable.  
\end{itemize}
However, all \textbf{the results above can not be used in practice} since they are established for $\eta$ and $\beta$ at best depending on $L$ (or we just know that it exists a good choice). Indeed, for neural networks, $L$ is not known.     

\paragraph*{RMSProp results}
~~\\
For RMSProp (and adaptive GD optimizers) much less is known:
\begin{itemize}
	\item For strongly convex functions, assuming the gradient and the parameter sequence bounded, it is possible to obtain convergence for the Cesaro average of the function value \cite{Adam, rms_strong_convex} with a rate of $\mathcal{O}(1/\sqrt{n})$ for learning rates that decrease as $1/\sqrt{n}$.
	\item In the non-convex setting, in addition to \textbf{global $L$-smoothness}, the authors either assume that the sequence $(\theta_n)_{n\in \mathbb{N}}$ is bounded (or apply a projection at each step) or that the \textbf{gradient is bounded} to establish a gradient convergence \cite{zou_rms, weak_rms, rms_penalty} for constant step size. The bounded assumption is removed in \cite{rms_not_bounded} using a learning rate of the form $\eta_n=1/\sqrt{n}$. To the best of our knowledge, the only work concerning convergence of the iterates is \cite{rms_bianchi} for KL functions but the authors assume the \textbf{boundedness of the moving average} $(s_n)_{n\in \mathbb{N}}$. Such an hypothesis is not easily satisfied a priori.  
\end{itemize}
As in the case of Momentum, all these results set the values of $\eta$ and $\beta$ depending on $L$. 

\paragraph*{Tuning hyperparameters}
~~\\
Given that the smoothness constant ($L$) is not available, practitioners have to tune the value of the different hyperparameters \cite{gridsearch_adam}. One may think that adaptive GD are less sensitive to these choices than GD and Momentum as the moving average $(s_n)$ is used to scale the learning rate. 
In \cite{rms_tuning}, Wilson showed that adaptive GD methods need the same amount of tuning as GD to obtain satisfactory performances. To tackle the expensive tuning problem, some strategies have been proposed for Momentum (at the best of our knowledge, there is no attempt for RMSProp). In \cite{adaptive_HB}, for a $\mu$-strongly convex function, they build an online approximation of $L$ and $\mu$ to compute the optimal Polyak hyperparameters \cite{Polyak}. But this approach seems very related to the convex assumption. Another attempt is based on the GD Armijo rule \cite{armijo, wolfe1, wolfe2} popular in the optimization community to adaptively tune $\eta$. The learning rate $\eta_n$ is chosen by a linesearch in order to achieve a sufficient decrease of the function $\R$:
\begin{equation}
	\R(\theta_n-\eta_n \nR(\theta_n))-\R(\theta_n) \leq -\lambda \eta_n \|\nR(\theta_n)\|^2,
	\label{armijo_rule}
\end{equation}
where $\lambda \in ]0,1[$. 
\cite{armijo_momentum} suggest a way to use the Armijo backtracking for Momentum. A first linesearch is done on $\beta$ to impose the cosinus between $v_n$ and $\nR(\theta_n)$ to be positive. Then a second linesearch is performed on the following Armijo rule:
\begin{equation}
	\R(\theta_n-\eta_n v_n)-\R(\theta_n) \leq -\lambda \eta_n \nR(\theta_n)\cdot v_n.
	\label{Tong_armijo}
\end{equation}
This strategy has two drawbacks. First, the authors need to introduce four new hyperparameters not related to the linesearches. Their (physical) meaning and influence on the convergence of the algorithm are not discussed. In addition to this, condition \eqref{Tong_armijo} combined with the first linesearch forces $\R$ to decrease. This changes completely the Momentum dynamics characterizing by the non-monotony of $\R$ \cite{non_monotone_HB, attouch_continuous}. \\
It is also usual in the litterature to proceed to a linesearch based on the descent lemma (on $\R$) in order to estimate $L$. Some guarantees on the gradient convergence are known for exact linesearch \cite{Nesterov_dual} or inexact linesearch if the time step is assumed bounded by below by some positive constant \cite{Fista_goldstein}. However, these approaches do not decrease the mechanical energy and do not ensure any stability properties \cite{Bilel, Lyap_Theory_Bilel}.

This review of the literature raises an important question: \textbf{is it possible to build an adaptive hyperparameter strategy both for Momentum and RMSProp ensuring convergence of the iterates under the KL-property ?}

\paragraph*{Our Results}
~~\\
Our main contributions are:
\begin{enumerate}
	\item We propose \textbf{adaptive time step strategies} both for Momentum and RMSProp that preserve dynamical properties of these systems and introduce few hyperparameters (often related to linesearch in our case and not to the dynamics) in section \ref{generalize_armijo}. Their role and influence will be completely clarified through the paper.
	\item We prove convergence of the iterates for this adaptive Momentum under Lojasiewicz assumption (it works also for KL but we consider Lojasiewicz function to simplify the notations) removing the global $L$-smoothness hypothesis in theorem \ref{momentum_th} of section \ref{analysis_momentum}. We compute \textbf{explicit convergence rates} that can be compared to the GD ones. As a corollary \ref{momentum_corollary}, we obtain a result for the constant step size Momentum. 
	\item In section \ref{analysis_rms}, we establish \textbf{convergence of the iterates} for adaptive RMSProp for Lojasiewicz function \textbf{removing the global $L$-smoothness, the bounded gradient and the bounded moving average assumption} and get convergence rates too, in theorem \ref{rms_th}. As a corollary \ref{rms_corollary}, we obtain the first strong result for constant step size RMSProp assuming only Lojasiewicz inequality and $L$-smoothness. Our analysis highlight the significant role of $\epsilon_a$ in the dynamics.
	\item In the last section \ref{num_experiments}, constant step size and adaptive strategies are numerically evaluated on two ML benchmarks.
\end{enumerate}
			
\section{Generalizing Armijo by Lyapunov Theory}
\label{generalize_armijo}

In this section, we present the well-known ODE framework to analyse iterative algorithms and use it to build two adaptive time step strategies for Momentum and RMSProp.

\paragraph*{ODE Framework}
~~\\
In these recent years, many researchers \cite{variational_perspective, continuous_general} have underlined the deep relations between ODE and optimization and use them to understand the optimizers' behaviour.  For instance, in order to analyse the sequence produced by GD and compare it to Momentum, the update rule \eqref{GD_update} is seen as an Euler discretization of the gradient flow (GF) \cite{Polyak, variational_perspective} when $\eta$ is the time step of the numerical method:
\begin{equation}
	\theta'(t)=-\nR(\theta(t)).
	\label{GF}
\end{equation}
In fact, we may expect that when $\eta$ is small enough, the dynamic of GF will describe the behaviour of GD. The same can be done for Momentum. By introducing an affine relation between $\beta$ and $\eta$, namely $\beta = 1-\bar{\beta}\eta$ ($\bar{\beta}>0$), the relations \eqref{Momentum_update} can be written using only the parameter sequence:
\begin{equation*}
	\dfrac{\theta_{n+1}-2\theta_n+\theta_{n-1}}{\eta^2} + \bar{\beta}\dfrac{\theta_n-\theta_{n-1}}{\eta}+\nR(\theta_n)=0,
\end{equation*}
which may be seen as finite differences for the ODE:
\begin{equation}
	\theta''(t)+\bar{\beta}\theta'(t)+\nR(\theta(t))=0,
	\label{Momentum_ode_theta}
\end{equation}
when $\eta$ tends to 0; see \cite{variational_perspective, continuous_general, Bilel} for more details. Let us rewrite \eqref{Momentum_ode_theta} as a first order ODE to reintroduce $v$:
\begin{equation}
	\left\{
	\begin{array}{ll}
		\theta'(t) = -v(t), \\ 
		v'(t) = \bar{\beta}v(t) + \nR(\theta(t))^2. 
	\end{array}
	\right.
	\label{Momentum_ode}
\end{equation}
The dynamic \eqref{Momentum_ode} is very different of GF. In fact, for GF, $\R$ decreases monotonically with \textbf{a dissipation of $-\|\nR(\theta(t))\|^2$}, that is to say for $\theta(t)$ solution of \eqref{GF} we have:
\begin{equation}
	\dfrac{d\R(\theta(t)}{dt} = -\|\nR(\theta(t))\|^2 \leq 0.
	\label{dissipation_GF}
\end{equation}
However, for $(\theta(t),v(t))$ solution of \eqref{Momentum_ode}, we have:
\begin{equation*}
	\dfrac{d\R(\theta(t))}{dt} = -\nR(\theta(t))\cdot v(t).
	\label{R_Momentum}
\end{equation*}
The function itself may increase but not the mechanical energy \cite{non_monotone_HB} defined by:
\begin{equation}
	V(\theta,v) = \R(\theta)+\frac{\|v\|^2}{2}.
	\label{Lyapunov_momentum}
\end{equation}
Indeed, a simple calculation gives (\textbf{dissipation of $-\bar{\beta} \|v(t)\|^2$}):
\begin{equation}
	\dfrac{dV(\theta(t),v(t))}{dt} = -\bar{\beta}\|v(t)\|^2 \leq 0.
	\label{dissipation_momentum}
\end{equation}
The functions $\R$ for GF \eqref{GF} and $V$ for \eqref{Momentum_ode} are called Lyapunov functions as they decrease along the trajectories of the ODE. Their monotony play a central role in the stability of dynamical systems, see \cite{Khalil} for an overview (bad things can happen for unstable systems, see \cite{Bilel}). In addition, they are the key tools to prove asymptotic convergence of $(\theta(t))_{t\geq 0}$ for \eqref{GF} and \eqref{Momentum_ode} in the Lojasiewicz setting \cite{Haraux_autonomous, attouch_continuous}. 

\paragraph*{Discretization}
~~\\
Given that \eqref{GD_update} and \eqref{Momentum_update} with $\beta=1-\bar{\beta}\eta$ are consistent discretizations of \eqref{GF} and \eqref{Momentum_ode}, we have respectively for $\eta$ small enough:
\begin{equation*}
	\R(\theta_{n+1})-\R(\theta_n) = -\eta \|\nR(\theta_n)\|^2+o(\eta),
\end{equation*}
\begin{equation}
	V(\theta_{n+1},v_{n+1})-V(\theta_n,v_n) = -\eta\|v_{n+1}\|^2+o(\eta).
	\label{Taylor_momentum}
\end{equation}
As explained in \cite{Bilel}, common choices of hyperparameters may violate the decreasing property (and the decreasing rate) of both $\R$ and $V$ given by the ODE. This produces unstable trajectories that have a significant impact on the optimization performances and the training time. With this point of view, the classical Armijo rule \eqref{armijo_rule} can be seen as a way to force these (nice) continuous properties up to a constant $\lambda$. Indeed, the authors in \cite{Absil} prove the stability and convergence of the trajectories produced by the Armijo backtracking. In few words, our strategy to generalize Armijo to Momentum and RMSProp could be sum up in three points:
\begin{enumerate}
	\item An ODE has to be identified in such a way that the optimizer is a consistent discretization of this system. To do this, one can rely on the literature on this subject, see \cite{continuous_general} for a general ODE that takes into account all the ML optimizers.
	\item Then a Lyapunov function for this continuous equation has to be built (it is not unique) to get a relation of the form \eqref{dissipation_momentum}.
	\item Finally, we adapt the Armijo rule \eqref{armijo_rule} by replacing $\R$ by the Lyapunov function and the right hand side by the dissipation obtained in step 2. This is why we call the Armijo rule, the Lyapunov inequality.
\end{enumerate}

\paragraph*{Adaptive Momentum}
~~\\
Concretely, this gives the following update for Momentum:
\begin{equation}
	\left\{
	\begin{array}{ll}
		v_{n+1} = \beta_n v_n + \eta_n \nR(\theta_n),  \\
		\theta_{n+1} = \theta_n - \eta_n v_{n+1}.
		\label{Momentum_eq}
	\end{array}
	\right.
\end{equation}
where $\beta_n = 1-\bar{\beta}\eta_n$ with the following conditions on the time step (Lyapunov inequality):
\begin{equation}
	V(\theta_{n+1},v_{n+1})-V(\theta_n,v_n) \leq -\lambda \bar{\beta} \eta_n \|v_{n+1}\|^2,
	\label{Momentum_condition}
\end{equation}
and $\eta_n \leq \frac{1}{\bar{\beta}}$. This restriction on the learning rate is introduced in order for $\beta_n$ to be positive (necessary to compute effectively a moving average). To be complete it remains to explain how we proceed for the linesearch that imposes \eqref{Momentum_condition}:
\begin{enumerate}
	\item For the first iteration, the time step begins with the value $\eta_{init}=\frac{1}{\bar{\beta}}$.
	\item At each iteration ($n\geq 1)$, we begin the linesearch with the value $\eta_n^0 = \min\left({f_2\eta_{n-1}, 1/\bar{\beta}}\right)$ where $\eta_{n-1}$ is the admissible time step of the previous iteration and $f_2>1$. Contrary to classical backtracking \cite{armijo}, we decide to begin the linesearch not on a constant time step but to use the previous one up to factor $f_2$. This technique was first introduced in \cite{Rondepierre,Lyap_Theory_Bilel} and is a lot more efficient than the classical one.
	\item Then (as in classical linesearch) we reduce the initial time step $\eta_n^0$ by a factor $f_1>1$, $\eta_n^i = \eta_n^0/f_1^i$ until equation \eqref{Momentum_condition} is satisfied. This process finishes according to the Taylor expansion \eqref{Taylor_momentum}.
\end{enumerate}
The optimizer is written as algorithm \ref{algo_adaptive_momentum} in appendix \ref{appendix_algorithms}. The system \eqref{Momentum_eq} admits by construction a Lyapunov function which gives it nice stability and attractivity properties, see \cite{Lyap_Theory_Bilel,Lyapunov_non_monotonic, Lyapunov_discrete}.

\paragraph*{Adaptive RMSProp}
~~\\
The same could be done for RMSProp. By also imposing the relation $\beta = 1-\bar{\beta}\eta$, the following ODE can be derived as an approximation of \eqref{RMSProp_update}, see \cite{continuous_general}:
\begin{equation}
	\left\{
	\begin{array}{ll}
		\theta'(t) = - \dfrac{\nR(\theta(t))}{\epsilon_a+\sqrt{s(t)}}, \\
		s'(t) = \bar{\beta}\left[-s(t) + \nR(\theta(t))^2\right].
	\end{array}
	\right.
	\label{rms_ode}
\end{equation}
We can exhibit $\R$ as a Lyapunov function for \eqref{rms_ode}:
\begin{equation*}
	\dfrac{d\R(\theta(t))}{dt} = - \left|\left|\dfrac{\nR(\theta(t))}{\sqrt{\epsilon_a+\sqrt{s(t)}}}\right|\right|^2 \leq 0.
\end{equation*}
As a result, the update rule for RMSProp is the following:
\begin{equation}
	\left\{
	\begin{array}{ll}
		s_{n+1} = \beta_n s_n +(1-\beta_n)\nR(\theta_n)^2,  \\
		\theta_{n+1} = \theta_n - \eta_n \dfrac{\nR(\theta_n)}{\epsilon_a+\sqrt{s_{n+1}}},
	\end{array}
	\right.
	\label{RMSProp_eq}
\end{equation}
with $\beta_n = 1-\bar{\beta}\eta_n$. The Armijo rule becomes (Lyapunov inequality):
\begin{equation}
	\R(\theta_{n+1})-\R(\theta_n) \leq - \lambda \eta_n \left|\left|\dfrac{\nR(\theta_n)}{\sqrt{\epsilon_a+\sqrt{s_{n+1}}}}\right|\right|^2,
	\label{RMSProp_Lyapunov}
\end{equation}
with the supplementary condition $\eta_n \leq 1/\bar{\beta}$. The linesearch procedure is exactly the same as Momentum. The procedure is described as algorithm \ref{algo_adaptive_rms} in appendix \ref{appendix_algorithms}.\\ 

The inequalities \eqref{Momentum_condition} and \eqref{RMSProp_Lyapunov} transform systems \eqref{Momentum_eq} and \eqref{RMSProp_eq} into dissipative systems, see \cite{Haraux_autonomous}. This kind of dynamical systems enjoys asymptotic convergence for a large class of functions (and other proprieties). The next sections will rigorously establish this fact.

\section{Analysis of Adaptive Momentum}
\label{analysis_momentum}

Before stating the convergence result for Adaptive Momentum, let us recall some classical hypotheses in non-convex optimization and the definition of a Lojasiewicz function \cite{Lojasiewicz_gradient, Kurdyka}.

\begin{assumption}
	$\R$ is differentiable on its whole domain $\mathbb{R}^N$ and lower bounded. 
	\label{c1}
\end{assumption}

\begin{assumption}
	$\nR$ is locally Lipshitz continuous.
	\label{local_lip}
\end{assumption}

\begin{assumption}
	$\nR$ is globally Lipshitz continuous or equivalently $L$-smooth.
	\label{global_lip}
\end{assumption}

\begin{assumption}
	$\R$ is coercive that is to say $\R(\theta)\to +\infty$ when $\|\theta\| \to +\infty$.
	\label{coercive}
\end{assumption}

\begin{assumption}
	$\bar{\beta}>1$ and $\lambda < \frac{1}{2\bar{\beta}}$.
	\label{coeffs}
\end{assumption}

\begin{definition}[Lojasiewicz]
	We say that a differentiable function $f: \Rb^m \mapsto \Rb$ satisfies Lojasiewicz inequality at a critical point $x^*\in\Rb^m$ if there are $c>0$, $\alpha \in ]0,1[$ and a neighborhood $\mathcal{U}_{x^*}$ of $x^*$ such that:
	\begin{equation}
		x\in \mathcal{U}_{x^*} \Rightarrow \|\nabla f(x)\| \geq c \|f(x)-f(x^*)\|^{1-\alpha}.
		\label{loj_condition}
	\end{equation}
	The constants $\alpha$ and $c$ are called the Lojasiewicz coefficients associated to $x^*$. Sometimes when the context is clear, we will just use $\mathcal{U}$ in place of $\mathcal{U}_{x^*}$. We say that $f$ is a Lojasiewicz function if it satisfies the Lojasiwicz inequality in the neighborhood of all its critical points. 
\end{definition}

Let us now state the main result of this section.

\begin{theorem}
	Assume \ref{c1}, \ref{local_lip}, \ref{coercive}, \ref{coeffs}, and that $V$ is Lojasiewicz. Then the sequence $(\theta_n, v_n)$ produced by Adaptive Momentum converges to $(\theta^*,0)$ with $\theta^*$ a critical point of $\R$.
	Denoting by $\alpha$ and $c$ the Lojasiewicz coefficients associated to $\theta^*$, the following convergence rates hold:
	\begin{itemize}
		\item if $\alpha=\frac{1}{2}$, we have:
		\begin{equation*}
			\|\theta_n-\theta^*\| = \mathcal{O}\left(\exp{\left(-\frac{1}{4}\displaystyle{\sum_{k=1}^{n-1}\frac{1}{B_k}}\right)}\right).
		\end{equation*}
		\item If $0<\alpha<\frac{1}{2}$v, we have:
		\begin{equation*}
			\|\theta_n-\theta^*\| = \mathcal{O}\left(\left[\displaystyle{\sum_{k=1}^{n-1}\frac{1}{B_k}}\right]^{-\frac{\alpha}{1-2\alpha}}\right).
		\end{equation*}
		\item If $\frac{1}{2}<\alpha<1$, then $(\theta_n)_{n\in \mathbb{N}}$ converges in finite time, that is to say it exists $n'\geq 0$ such that $\theta_n = \theta^*$ for $n\geq n'$. 
	\end{itemize}
	In the above expressions, we have:
	\begin{equation}
		B_k \coloneqq \frac{2}{\lambda \bar{\beta}c^2\eta_k^3}\max{\left(1,r_k\left[
			(1-\bar{\beta})\eta_k+1\right]^2\right)}
		\label{expr_Bn}
	\end{equation}
	with $r_k\coloneqq\dfrac{\eta_k}{\eta_{k-1}}$.
	\label{momentum_th}
\end{theorem}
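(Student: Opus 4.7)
The plan is to follow the classical Lojasiewicz/KL scheme of Absil--Mahony--Andrews and Attouch--Bolte, but applied to the Lyapunov function $V$ rather than $\R$. The first step is to exploit the Lyapunov inequality \eqref{Momentum_condition}: it immediately implies that $n \mapsto V(\theta_n,v_n)$ is non-increasing, and Assumption \ref{c1} (lower boundedness of $\R$) combined with coercivity (Assumption \ref{coercive}) confines $(\theta_n)$ to a compact sublevel set of $\R$; the bound $\|v_n\|^2/2 \leq V(\theta_0,v_0) - \inf \R$ confines $v_n$ to a ball. Thus the whole sequence $(\theta_n,v_n)$ lies in a fixed compact set $K$, and $V(\theta_n,v_n) \to V^{\infty}$.

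Before invoking Lojasiewicz, I would establish a uniform lower bound $\eta_n \geq \eta_{\min} > 0$. On $K$, $\nR$ is $L_K$-Lipschitz by Assumption \ref{local_lip}, so a Taylor expansion in the spirit of \eqref{Taylor_momentum} shows that the test \eqref{Momentum_condition} holds whenever $\eta \leq \eta^{*}(L_K,\bar{\beta},\lambda)$, where Assumption \ref{coeffs} ($\lambda < 1/(2\bar{\beta})$) is what lets the quadratic remainder be absorbed. The backtracking factor $f_1$ then gives $\eta_n \geq \eta^{*}/f_1$. Telescoping \eqref{Momentum_condition} yields $\sum_n \eta_n \|v_{n+1}\|^2 < \infty$, and with $\eta_n \geq \eta_{\min}$ this forces $\|v_n\| \to 0$. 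Writing $\eta_n \nR(\theta_n) = v_{n+1} - \beta_n v_n$ and using continuity of $\nR$ on $K$, every cluster point must be of the form $(\theta^*,0)$ with $\nR(\theta^*)=0$.

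The heart of the proof is the Lojasiewicz step. Since $\nabla V(\theta,v) = (\nR(\theta),v)$, the inequality \eqref{loj_condition} at a cluster point $(\theta^*,0)$ reads
\begin{equation*}
\|\nR(\theta_n)\|^2 + \|v_n\|^2 \geq c^2 \bigl(V(\theta_n,v_n) - V^{\infty}\bigr)^{2(1-\alpha)}
\end{equation*}
for all large $n$. Using $\nR(\theta_n) = (v_{n+1}-\beta_n v_n)/\eta_n$ with $\beta_n = 1-\bar{\beta}\eta_n$, together with the dissipation bounds $\|v_{n+1}\|^2 \leq (V_n-V_{n+1})/(\lambda\bar{\beta}\eta_n)$ and $\|v_n\|^2 \leq (V_{n-1}-V_n)/(\lambda\bar{\beta}\eta_{n-1})$, I expect to obtain an estimate of the form
\begin{equation*}
\bigl(V_n - V^{\infty}\bigr)^{2(1-\alpha)} \leq B_n \bigl[(V_{n-1}-V_n) + (V_n - V_{n+1})\bigr],
\end{equation*}
with $B_n$ exactly as in \eqref{expr_Bn}: the factor $r_n [(1-\bar{\beta})\eta_n+1]^2$ arises from the cross term $\beta_n v_n/\eta_n$ and the need to recycle the previous dissipation. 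The three rate regimes then follow from the standard concave/discrete-Gronwall integration of this recursion: $\alpha=1/2$ gives geometric decay at rate $\exp(-\tfrac14 \sum 1/B_k)$; $\alpha < 1/2$ yields the polynomial rate $[\sum 1/B_k]^{-\alpha/(1-2\alpha)}$; $\alpha > 1/2$ makes the exponent $2(1-\alpha)<1$, so that a single step drops $V_n - V^{\infty}$ by a fixed amount and convergence occurs in finitely many iterations. Summing the one-sided KL inequality also gives $\sum \|v_{n+1}\| < \infty$, so $\sum \|\theta_{n+1}-\theta_n\| = \sum \eta_n \|v_{n+1}\| < \infty$ (using $\eta_n \leq 1/\bar{\beta}$), and the cluster point is unique.

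The main obstacle I foresee is the bookkeeping in the KL step: unlike for GD, the Lyapunov gradient $(\nR(\theta_n),v_n)$ mixes two consecutive indices because $\nR(\theta_n)$ is recovered from $v_{n+1}$ and $v_n$. This forces the recursion to involve both $V_{n-1}-V_n$ and $V_n-V_{n+1}$, and tracking the precise $\eta_n$-dependence through the cross term $\beta_n v_n$ is what produces the somewhat intricate coefficient \eqref{expr_Bn} with the ratio $r_n = \eta_n/\eta_{n-1}$. The uniform lower bound $\eta_n \geq \eta_{\min}$ from the linesearch analysis in paragraph two is then essential to ensure $B_n \leq B_{\max}$, which is what converts the abstract summability into the quantitative rates claimed.
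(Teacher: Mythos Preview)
Your proposal is correct and follows essentially the same route as the paper: boundedness via coercivity of $V$, a uniform lower bound $\eta_n\geq\eta^*$ from the backtracking (Lemma~\ref{Momentum_eta_inf}), subsequential convergence to $(\theta^*,0)$, and then the Lojasiewicz machinery applied to $V$, with the two-index recursion $V(z_n)^{2(1-\alpha)}\leq B_n[V(z_{n-1})-V(z_{n+1})]$ handled by a two-step discrete Gronwall lemma. The only place your outline is thin is the summability step for uniqueness of the cluster point: the paper obtains it not directly from ``summing the one-sided KL inequality'' but via the intermediate inequality $\Delta_{n+1}\leq\tfrac12(\Delta_n-\Delta_{n+1})+C[V(z_n)^\alpha-V(z_{n+1})^\alpha]$ (Lemma~\ref{lemma_delta_momentum}), which uses the AM--GM trick $\sqrt{uv}\leq(u+v)/2$ to convert $\Delta_{n+1}^2\lesssim(\Delta_n+\Delta_{n+1})[V(z_n)^\alpha-V(z_{n+1})^\alpha]$ into a telescoping form; this is also what feeds the trapping argument ensuring the iterates stay in $\mathcal{U}_{z^*}$ before you can invoke Lojasiewicz for all large $n$.
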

\begin{proof}
	See Appendix \ref{appendix_momentum}.
\end{proof}

The constant step size Momentum analysis appears as a consequence of this theorem:

\begin{corollary}
	Assume \ref{c1}, \ref{global_lip}, \ref{coercive} and that $V$ is Lojasiewicz. It exists $\eta_o>0$ such that the sequence $(\theta_n, v_n)$ produced by Momentum \eqref{Momentum_update} (with $\beta=1-\bar{\beta}\eta$) converges to $(\theta^*,0)$ where $\theta^*$ is a critical point of $\R$, for any choice of $\bar{\beta}\geq 1$.
	Denoting by $\alpha$ and $c$ the Lojasiewicz coefficients associated to $\theta^*$, the following convergence rates hold:
	\begin{itemize}
		\item if $\alpha=\frac{1}{2}$, we have:
		\begin{equation}
			\|\theta_n-\theta^*\| = \mathcal{O}\left(\exp{\left(-\frac{n}{4B}\right)}\right).
			\label{momentum_exp_cst}
		\end{equation}
		\item If $0<\alpha<\frac{1}{2}$, we have:
		\begin{equation*}
			\|\theta_n-\theta^*\| = \mathcal{O}\left(n^{-\frac{\alpha}{1-2\alpha}}\right).
		\end{equation*}
		\item If $\frac{1}{2}<\alpha<1$, then $(\theta_n)_{n\in \mathbb{N}}$ converges in finite time.
	\end{itemize}
	In the above expression, we have:
	\begin{equation*}
		B \coloneqq \frac{2}{\lambda \bar{\beta}c^2\eta_o^3}\max{\left(1,\left[
			(1-\bar{\beta})\eta_o+1\right]^2\right)}.
	\end{equation*}
	\label{momentum_corollary}
\end{corollary}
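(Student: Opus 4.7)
The plan is to exhibit a threshold $\eta_o > 0$ for which the constant step iteration \eqref{Momentum_update} coincides with a valid trajectory of adaptive Momentum \eqref{Momentum_eq} with $\eta_n \equiv \eta_o$, so that Theorem \ref{momentum_th} applies directly. First I fix an admissible $\lambda \in (0, 1/(2\bar\beta))$, for instance $\lambda = 1/(4\bar\beta)$, which is legitimate since $\bar\beta \geq 1$. My goal is then to find $\eta_o$ for which the Lyapunov inequality \eqref{Momentum_condition} holds at every iteration, independently of the state $(\theta_n, v_n)$.

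The key estimate rests on the descent lemma applied to the $L$-smooth $\mathcal{R}$ (assumption \ref{global_lip} also entails the local version \ref{local_lip}). Combining the expansion
\begin{equation*}
V(\theta_{n+1}, v_{n+1}) - V(\theta_n, v_n) = \mathcal{R}(\theta_{n+1}) - \mathcal{R}(\theta_n) + \tfrac{1}{2}\|v_{n+1}\|^2 - \tfrac{1}{2}\|v_n\|^2
\end{equation*}
with the identity $\eta_o \nabla \mathcal{R}(\theta_n)\cdot v_{n+1} = \|v_{n+1}\|^2 - (1-\bar\beta\eta_o)\, v_n\cdot v_{n+1}$ coming from the update rule for $v$, and the bound $(1-\bar\beta\eta_o)\, v_n\cdot v_{n+1} \leq \tfrac{1}{2}(1-\bar\beta\eta_o)(\|v_n\|^2 + \|v_{n+1}\|^2)$ valid whenever $\bar\beta\eta_o \leq 1$, the $\|v_n\|^2$ contributions cancel and elementary algebra yields
\begin{equation*}
V(\theta_{n+1}, v_{n+1}) - V(\theta_n, v_n) \leq -\frac{\eta_o}{2}\bigl(\bar\beta - L\eta_o\bigr)\|v_{n+1}\|^2.
\end{equation*}
Taking any $\eta_o \leq \min\bigl(1/\bar\beta,\ \bar\beta(1-2\lambda)/L\bigr)$ makes the right hand side at most $-\lambda \bar\beta \eta_o \|v_{n+1}\|^2$, which is exactly \eqref{Momentum_condition}.

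With this choice, the sequence $(\theta_n, v_n)$ produced by \eqref{Momentum_update} fits the adaptive Momentum framework of Section \ref{generalize_armijo} with $\eta_n \equiv \eta_o$, so $r_k = \eta_k/\eta_{k-1} = 1$ for every $k$. All remaining hypotheses of Theorem \ref{momentum_th} are then in force, and the theorem gives convergence of $(\theta_n, v_n)$ to $(\theta^*, 0)$ with $\theta^*$ a critical point of $\mathcal{R}$. Plugging $\eta_k = \eta_o$ and $r_k = 1$ into \eqref{expr_Bn} collapses $B_k$ to the constant $B$ stated in the corollary, so $\sum_{k=1}^{n-1} 1/B_k = (n-1)/B$, which transforms the three regimes of the theorem into the claimed exponential, polynomial and finite-time rates.

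The main obstacle is the algebraic cancellation leading to the clean Lyapunov decrease in $\|v_{n+1}\|^2$ only: any residual $\|v_n\|^2$ term would translate into an extra smallness condition depending on the previous iterate and break the uniform-in-$n$ choice of $\eta_o$ that is the whole point of the constant step size regime.
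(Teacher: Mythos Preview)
Your argument is correct and follows the same overall strategy as the paper: show that a constant step $\eta_o$ satisfies the Lyapunov inequality \eqref{Momentum_condition}, then invoke the analysis behind Theorem~\ref{momentum_th} with $\eta_k\equiv\eta_o$ so that $B_k\equiv B$. The paper simply cites Lemma~\ref{Momentum_eta_inf} for the first step and says ``copy the proof above''; you instead redo that lemma by a direct descent-lemma computation, which is more elementary and gives the explicit threshold $\eta_o\le\min\bigl(1/\bar\beta,\ \bar\beta(1-2\lambda)/L\bigr)$, matching the $\bar\beta=1$ value in Remark~\ref{momentum_eta_ex}. Two small points: the $\|v_n\|^2$ contributions do not literally cancel---a term $-\tfrac{\bar\beta\eta_o}{2}\|v_n\|^2$ survives, but it has the right sign and can be discarded, so your final inequality stands; and strictly speaking the constant-step iteration is not a run of the backtracking algorithm (the linesearch would restart at $\min(f_2\eta_o,1/\bar\beta)$, not $\eta_o$), so rather than ``fits the adaptive framework'' the accurate statement is that the \emph{proof} of Theorem~\ref{momentum_th} only uses \eqref{Momentum_condition}, the bound $\eta_n\le 1/\bar\beta$, $\inf_n\eta_n>0$, and $\eta_n/\eta_{n-1}$ bounded---all trivially true here---which is exactly how the paper phrases it.
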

\begin{proof}
	See Appendix \ref{appendix_momentum}.
\end{proof}

Let us give a sketch of the proof through some lemmas and at the same time comment the results.
The first lemma will justify that if the algorithm converges, $(v_n)_{n\in \mathbb{N}}$ converges to $0$. To understand that it is not obvious, let us imagine (just in this paragraph) that $z_n\coloneqq (\theta_n,v_n)$ converges. By passing to the limit in the inequality \eqref{Momentum_condition} and by continuity of $V$, we get immediately:
\begin{equation*}
	\displaystyle{\lim_{n\to+\infty}} \eta_n \|v_{n+1}\|^2 = 0. 
\end{equation*}
If the time step sequence $(\eta_n)$ is constant, then it is straightforward to deduce $v_n \to 0$. However, in the adaptive time step scenario, it can happen that $\eta_n$ converges to 0 and $v_n$ not. To avoid this situation, we need to prove $\inf \eta_n>0$. To do this, it is sufficient to show that the condition \eqref{Momentum_condition} can be achieved with a fixed learning rate. This is the topic of the next lemma.

For $\theta,v \in \Rb^N$ and a fix $\eta$, let us introduce the notations $v_{\eta}\coloneqq (1-\bar{\beta}\eta)v+\eta \nR(\theta)$ and $\theta_{\eta}=\theta-\eta v_{\eta}$.
\begin{lemma}
	Assume \ref{c1}, \ref{local_lip}, \ref{coeffs} and consider a compact set $K\subset \Rb^N$. There is $0<\eta_o\leq \frac{1}{\bar{\beta}}$, depending only on $\lambda$, $\bar{\beta}$ and the smoothness constant of $\R$ on $K$, such that for all $\eta\leq \eta_o$ and $\theta \in K$ we have:
	\begin{equation*}
		V(\theta_{\eta},v_{\eta})-V(\theta,v) \leq -\lambda \bar{\beta} \eta \|v_{\eta}\|^2.
	\end{equation*}
	\label{Momentum_eta_inf}
\end{lemma}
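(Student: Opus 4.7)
The plan is to compare the one-step increment $V(\theta_\eta,v_\eta)-V(\theta,v)$ with the continuous dissipation $-\bar\beta\|v\|^2$ from \eqref{dissipation_momentum}: a consistency argument should show that the increment equals $-\bar\beta\eta\|v\|^2+O(\eta^2)$, and since Assumption \ref{coeffs} forces $\lambda\bar\beta<1/2<1$, the target upper bound $-\lambda\bar\beta\eta\|v_\eta\|^2$ is strictly weaker than the leading dissipation at first order, so it must hold once $\eta$ is small enough.

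First I would use Assumption \ref{local_lip} to pick a compact set $K'\supset K$ and a constant $L_K$ such that $\nR$ is $L_K$-Lipschitz on $K'$. Assuming $(\theta,v)$ stays in a bounded region (so that $\theta_\eta\in K'$ whenever $\eta\le 1/\bar\beta$), the classical descent lemma on $\R$ gives
\begin{equation*}
\R(\theta_\eta)-\R(\theta)\le -\eta\,\nR(\theta)\cdot v_\eta+\frac{L_K\eta^2}{2}\|v_\eta\|^2.
\end{equation*}
Combining this with the exact identity $\tfrac{1}{2}(\|v_\eta\|^2-\|v\|^2)=v\cdot(v_\eta-v)+\tfrac{1}{2}\|v_\eta-v\|^2$, the relation $v_\eta-v=\eta(\nR(\theta)-\bar\beta v)$, and cancelling the $\eta^2\,\nR(\theta)\cdot v$ cross-terms, should yield
\begin{equation*}
V(\theta_\eta,v_\eta)-V(\theta,v)\le -\frac{\eta^2}{2}\|\nR(\theta)\|^2-\bar\beta\eta\Bigl(1-\frac{\bar\beta\eta}{2}\Bigr)\|v\|^2+\frac{L_K\eta^2}{2}\|v_\eta\|^2,
\end{equation*}
which is the discrete analogue of \eqref{dissipation_momentum} plus a consistency error of order $\eta^2\|v_\eta\|^2$.

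To close the argument I would apply Young's inequality
\begin{equation*}
\|v_\eta\|^2\le (1+\mu)(1-\bar\beta\eta)^2\|v\|^2+\Bigl(1+\tfrac{1}{\mu}\Bigr)\eta^2\|\nR(\theta)\|^2
\end{equation*}
with a parameter $\mu>0$ chosen so that $(1+\mu)\lambda<1/2$, which is possible precisely thanks to Assumption \ref{coeffs}. Substituting into the previous bound and matching the coefficients of $\|v\|^2$ and of $\|\nR(\theta)\|^2$ against the positive reservoirs $\bar\beta\eta(1-\bar\beta\eta/2)$ and $\eta^2/2$ reduces the Armijo-type condition to two scalar inequalities of the form $L_K\eta/2+\lambda\bar\beta\le c_i(\lambda,\bar\beta,\mu)$; both hold whenever $\eta\le\eta_o$ for some $\eta_o(\lambda,\bar\beta,L_K)>0$, which gives the conclusion.

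The main obstacle I anticipate is the implicit non-uniformity in $v$: the descent lemma requires $\nR$ to be Lipschitz on the segment $[\theta,\theta_\eta]$, whose length $\eta\|v_\eta\|$ is a priori unbounded in $v$. This forces one to interpret $K$ as really controlling both coordinates $\theta$ and $v$; in the later application to Theorem \ref{momentum_th}, global compactness of the trajectory is obtained beforehand from Assumption \ref{coercive} together with the Lyapunov decrease \eqref{Momentum_condition}, after which the present lemma can be invoked safely.
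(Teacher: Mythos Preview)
Your approach follows the same skeleton as the paper's proof: apply the descent lemma on $\R$, combine with the exact expansion of $\tfrac12(\|v_\eta\|^2-\|v\|^2)$, and reduce the Lyapunov inequality to checking that two scalar coefficients (of $\|v\|^2$ and $\|\nR(\theta)\|^2$) are nonpositive for small $\eta$. Your intermediate bound
\[
V(\theta_\eta,v_\eta)-V(\theta,v)\le -\tfrac{\eta^2}{2}\|\nR(\theta)\|^2-\bar\beta\eta\bigl(1-\tfrac{\bar\beta\eta}{2}\bigr)\|v\|^2+\tfrac{L_K\eta^2}{2}\|v_\eta\|^2
\]
is correct and coincides (after expanding $\|v_\eta\|^2$) with the paper's expression for $f(\theta,v)-\lambda\bar\beta\eta\|v_\eta\|^2$.

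The difference is in how the cross-terms are absorbed. The paper expands $\|v_\eta\|^2$ fully and applies the symmetric bound $v\cdot\nR(\theta)\le\tfrac12(\|v\|^2+\|\nR(\theta)\|^2)$; this produces the polynomial $Q(\eta)$ with $Q(0)=\lambda\bar\beta-\tfrac12$, and the condition $\lambda<\tfrac{1}{2\bar\beta}$ in Assumption~\ref{coeffs} is precisely what makes $Q(0)<0$. Your parametrised Young inequality turns out to be sharper in the $\|\nR(\theta)\|^2$ direction: after your substitution that coefficient becomes $\eta^2\bigl[-\tfrac12+(1+\tfrac1\mu)(\tfrac{L_K\eta}{2}+\lambda\bar\beta)\eta\bigr]$, which is negative for all small $\eta$ regardless of $\lambda\bar\beta$. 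The only binding constraint at $\eta=0$ is then on the $\|v\|^2$-coefficient, namely $(1+\mu)\lambda<1$, so your route actually goes through under the weaker hypothesis $\lambda<1$. Your stated requirement $(1+\mu)\lambda<1/2$ is stronger than needed, and the specific inequality $\lambda<\tfrac{1}{2\bar\beta}$ from Assumption~\ref{coeffs} is not where your argument bites---but this overstatement does no harm to correctness. Finally, your closing remark about the segment $[\theta,\theta_\eta]$ possibly leaving $K$ is a genuine subtlety that the paper's proof passes over in silence; your proposed resolution via prior compactness of the full $(\theta_n,v_n)$ trajectory is exactly how it is handled downstream.
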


\begin{proof}
	Define $f(\theta, v) \coloneqq V(\theta_{\eta},v_{\eta})-V(\theta,v)+\lambda \bar{\beta}\|v_{\eta}\|^2$ for $\eta \leq \frac{1}{\bar{\beta}}$.
	$\nR$ is locally Lipshitz so it is globally Lipshitz on $K$. Denoting by $L=L_K$ its smoothness constant, we can write:
	\begin{equation*}
		\R(y)-\R(x) \leq \nR(x)\cdot (y-x) + \frac{L}{2}\|y-x\|^2.
	\end{equation*}
	Applying this to $y=\theta_{\eta}$ and $x=\theta$, this leads to:
	\begin{equation*}
		\R(\theta_\eta)-\R(\theta) \leq -\eta \nR(\theta)\cdot v_{\eta}+\frac{L\eta^2}{2}\|v_{\eta}\|^2.
	\end{equation*}
	Rearranging the terms we have:
	\begin{multline*}
		f(\theta,v) \leq \eta \bigg[\frac{\bar{\beta}^2 L}{2}\eta^3+\bar{\beta}(\lambda\bar{\beta}^2-L)\eta^2+
		\frac{1}{2}(\bar{\beta}^2+L-4\lambda\bar{\beta}^2)\eta+\bar{\beta}(\lambda-1)\bigg]\|v\|^2 \\ + \eta^2 \left[-\bar{\beta}L\eta^2+(L-2\lambda\bar{\beta}^2)\eta+2\lambda \bar{\beta}\right]v \cdot \nR(\theta) + \frac{\eta^2}{2}\left[L\eta^2+2\lambda\bar{\beta}\eta-1\right]\|\nR(\theta)\|^2.
	\end{multline*}
	As the polynomial in front of $v\cdot \nR(\theta)$ is positive since $\eta \leq \frac{1}{\bar{\beta}}$, using $v\cdot \nR(\theta) \leq \frac{\|v\|^2+\|\nR(\theta)\|^2}{2}$, we can write:
	\begin{equation}
		f(\theta,v) \leq \eta P(\eta) \|v_n\|^2 + \eta^2 Q(\eta) \|\nR(\theta)\|^2,
		\label{f_polys}
	\end{equation}
	where:
	\begin{multline*}
			P(\eta) = \frac{\bar{\beta}L}{2}(\bar{\beta}-1)\eta^3+\left[\lambda \bar{\beta}^2(\bar{\beta}-1)+\frac{L}{2}(1-2\bar{\beta})\right]\eta^2
			+\frac{1}{2}\left[\bar{\beta}^2+L+2\lambda\bar{\beta}-4\lambda\bar{\beta}^2\right]\eta \\
			+ \bar{\beta}(\lambda-1),
	\end{multline*}
	\begin{equation*}
			Q(\eta) = \frac{L}{2}(1-\bar{\beta})\eta^2+\frac{1}{2}\left[L+2\lambda\bar{\beta}-2\lambda\bar{\beta}^2\right]\eta
			+ \lambda\bar{\beta}-\frac{1}{2}.
	\end{equation*}
	To finish the proof it is sufficient to find a domain $]0,\eta_o]$ where both $P$ and $Q$ are negative.
	By condition \ref{coeffs}, $Q(0) = \lambda \bar{\beta}-\frac{1}{2}<0$. So there exists $\eta_Q>0$ such that $Q$ is negative on $]0,\eta_Q]$ by continuity of $Q$. As $P(0)=\bar{\beta}(\lambda-1)<0$, we also obtain $\eta_P>0$ such that $P$ is negative on $]0,\eta_P]$. We set $\eta_o=\min{\left(\eta_P,\eta_Q, 1/\bar{\beta}\right)}$ to finish the proof.
\end{proof}

\begin{remark}
	The previous analysis enables to have a estimate of $\eta_o$ depending on $L$ but the sign analysis of the polynomials are tedious (comparing the roots of $P$ and $Q$). 
	Let us compute an estimation of $\eta_o$ for two values of $\bar{\beta}$:
	\begin{itemize}
		\item Fortunately for the value $\bar{\beta}=1$ (and $\lambda\leq 0.5$), $P$ is of degree 2 and $Q$ of degree $1$
		\begin{equation*}
			Q(\eta) \leq 0 \Leftrightarrow \eta \leq \eta_{root} \coloneqq \dfrac{1-2\lambda}{L}.
		\end{equation*}
		Finally for $0<\eta \leq \eta_{root}$, $P(\eta) \leq P(\eta_{root})=0$. Consequently, inequality \eqref{f_polys} gives that $\eta \leq \eta_o \coloneqq \min{\left(\frac{1}{\bar{\beta}}, \eta_{root}\right)} \implies f(\theta_n,v_n)\leq 0$.
		\item For $\bar{\beta}=2$ and $\lambda < \frac{1}{4}$, a precise analysis of the variations of $P$ and $Q$ gives $\eta_o$. Define:
		\begin{equation*}
			\begin{array}{ll}
				L_1 = 2(1-2\lambda)-2\sqrt{1-4\lambda}, \\
				L_2 = 2(1-2\lambda)+2\sqrt{1-4\lambda}.
			\end{array}
		\end{equation*}
		If $L_1 < L < L_2$, $\eta_{root} \coloneqq \frac{1}{\bar{\beta}}$. Otherwise, $\eta_{root}$ is defined as:
		\begin{equation*}
			\eta_{root} \coloneqq \frac{1}{2}-\frac{2\lambda}{L}-\sqrt{\frac{1}{4}+\frac{2\lambda-1}{L}+\frac{4\lambda^2}{L^2}} \underset{L\to +\infty}{\sim} \frac{1}{L}.
		\end{equation*}
		Then $\eta_o = \min{\left(\eta_{root}, 1/\bar{\beta}\right)}$.
	\end{itemize}
	\label{momentum_eta_ex}
\end{remark}

\begin{remark}
	According to the analysis above, the learning rate for Momentum seems to be of the same order as for GD that is to say the inverse of $L$. Let us recall that the convergence rate for GD when $\alpha=0.5$ is typically of the form $e^{-\eta c^2 n} \approx e^{-L^{-1} c^2 n}$\cite{Absil} whereas in our case it is $e^{-\eta^3 c^2 n}\approx e^{-L^{-3} c^2 n}$ for Momentum. Two comments should be made about this difference:
	\begin{itemize}
		\item The $c$-dependence of the convergence rate \eqref{momentum_exp_cst} is the same as GD. The constant $c^2$ is generally the smallest eigenvalue of the hessian. In other words it plays the same role as the constant $\mu$ for $\mu$-strongly convex functions. Our analysis does not succeed in catching the improvement $\sqrt{\mu}=c$ for strongly convex functions \cite{Siegel}. But even for convex functions with the Polyak-Lojasiewicz assumption (global Lojasiewicz with $\alpha=0.5$), the question is open if there is no uniqueness of the minimum, see \cite{Aujol_recap}.
		\item The $\eta$ or $L$-dependence of the convergence rate \eqref{momentum_exp_cst} is completely different. It seems that Momentum in our analysis drastically slows down in stiff regions ($L\gg 1$) and it is independent of the value $L$ in flat region ($L$ small enough in order for $\eta$ to be close to $1/\bar{\beta}$). 
	\end{itemize}
\end{remark}

After proving that the learning rates can not vanish, we need to show that if the iterates enter a neighborhood $\mathcal{U}$, when the Lojasiewicz inequality is valid, they can not escape from $\mathcal{U}$. To do this, we will establish an inequality that compares the distance between two successive parameters $\Delta_n \coloneqq \|\theta_{n+1}-\theta_n\|$ and $V(z_n)$ (that decreases), in such a neighborhood. 

\begin{lemma}
	Assume that $V$ is Lojasiewicz. Let us consider $z^*\coloneqq(\theta^*,0)$ where $\theta^*$ is a critical point of $\R$ and assume that $z_n \in \mathcal{U}_{z^*}$ for some $n\geq 0$. We have:
	\begin{equation}
		\Delta_{n+1} \leq \frac{1}{2}(\Delta_n-\Delta_{n+1})+
		\dfrac{2A_n}{\lambda\alpha c \bar{\beta} \eta_{n-1} \eta_n}\left[V(z_n)^{\alpha}-V(z_{n+1})^{\alpha}\right].
		\label{delta_ineq}
	\end{equation}
	where:
	\begin{equation*}
		A_n \coloneqq \max{\left(\eta_{n-1}, \eta_n(\eta_n+\beta_n)\right)},
	\end{equation*}
	and $\alpha$, $c$ are the Lojasiewicz coefficients associated to $z^*$.
	\label{lemma_delta_momentum}
\end{lemma}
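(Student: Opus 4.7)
I would follow the classical three-step Kurdyka-Lojasiewicz template, adapted to the Lyapunov function $V$ and the discrete dynamics \eqref{Momentum_eq}. After the harmless shift making $V(z^*) = 0$, the first step is to use concavity of $t\mapsto t^\alpha$ on $[0, +\infty)$, which gives
\begin{equation*}
V(z_n)^\alpha - V(z_{n+1})^\alpha \geq \alpha\, V(z_n)^{\alpha-1}\bigl(V(z_n) - V(z_{n+1})\bigr).
\end{equation*}
The Lyapunov inequality \eqref{Momentum_condition} lower-bounds the decrement by $\lambda\bar\beta\eta_n\|v_{n+1}\|^2$, while the Lojasiewicz inequality at $z^*$, which applies because $z_n\in\mathcal{U}_{z^*}$, rewrites $V(z_n)^{\alpha-1}\geq c/\|\nabla V(z_n)\|$. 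Chaining these three ingredients yields
\begin{equation*}
V(z_n)^\alpha - V(z_{n+1})^\alpha \geq \frac{\alpha c\lambda\bar\beta\,\eta_n\,\|v_{n+1}\|^2}{\|\nabla V(z_n)\|}.
\end{equation*}

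Next I would translate the right-hand side into an inequality involving only the increments $\Delta_n$ and $\Delta_{n+1}$. Since $\nabla V(\theta,v)=(\nR(\theta),v)$, the triangle inequality gives $\|\nabla V(z_n)\|\leq \|\nR(\theta_n)\|+\|v_n\|$. The second line of \eqref{Momentum_eq} provides $\|v_{n+1}\|=\Delta_{n+1}/\eta_n$ and $\|v_n\|=\Delta_n/\eta_{n-1}$; the first line isolates $\nR(\theta_n)=(v_{n+1}-\beta_n v_n)/\eta_n$, whence $\|\nR(\theta_n)\|\leq \Delta_{n+1}/\eta_n^2 + \beta_n\Delta_n/(\eta_n\eta_{n-1})$. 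Putting a common denominator $\eta_n^2\eta_{n-1}$ and factoring produces
\begin{equation*}
\|\nabla V(z_n)\| \leq \frac{A_n}{\eta_n^2\eta_{n-1}}\bigl(\Delta_n + \Delta_{n+1}\bigr),
\end{equation*}
which is precisely where the definition $A_n=\max\bigl(\eta_{n-1},\,\eta_n(\eta_n+\beta_n)\bigr)$ emerges naturally.

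Substituting the two previous displays and simplifying leads to the single quadratic inequality
\begin{equation*}
\Delta_{n+1}^2 \leq \frac{A_n\bigl[V(z_n)^\alpha - V(z_{n+1})^\alpha\bigr]}{\lambda\alpha c\bar\beta\,\eta_{n-1}\eta_n}\,\bigl(\Delta_n + \Delta_{n+1}\bigr).
\end{equation*}
To conclude, I would take square roots and apply the weighted AM-GM bound $\sqrt{xy}\leq x/4 + y$ (Young's inequality tuned with parameter $2$), absorb the resulting $\Delta_{n+1}/4$ contribution on the left-hand side, and rearrange to obtain \eqref{delta_ineq} exactly. I expect no conceptual difficulty in this scheme; the main obstacle is the careful book-keeping of the $\eta_n$, $\eta_{n-1}$, and $\beta_n$ factors propagating from the momentum update, so that they consolidate into the single constant $A_n$. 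This asymmetric handling of two consecutive time steps is what ultimately produces the successive ratios $r_k$ appearing in the constant $B_k$ of \eqref{expr_Bn}.
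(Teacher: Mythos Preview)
Your plan is correct and essentially identical to the paper's proof: the same concavity-plus-\L{}ojasiewicz-plus-dissipation chain, the same bound on $\|\nabla V(z_n)\|$ via the update rule (the paper writes it as $\tfrac{1}{\eta_n}\|v_{n+1}\|+(1+\tfrac{\beta_n}{\eta_n})\|v_n\|$ before substituting $\|v_k\|=\Delta_k/\eta_{k-1}$, but this is the same expression), and the same AM--GM step. The only cosmetic difference is that the paper splits the product as $\sqrt{\tfrac{1}{2}(\Delta_n+\Delta_{n+1})\cdot 2Y}\leq \tfrac{1}{4}(\Delta_n+\Delta_{n+1})+Y$ and then multiplies by $2$ and subtracts $\Delta_{n+1}$, whereas you use the weighted form $\sqrt{xy}\leq x/4+y$ directly; both land on the same intermediate inequality $\Delta_{n+1}\leq\tfrac{1}{4}(\Delta_n+\Delta_{n+1})+Y$, and the final rearrangement is the same elementary algebra. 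One small point you should make explicit: the degenerate case $\nabla V(z_n)=0$ (which would break the \L{}ojasiewicz division) forces $v_n=0$ and $\nR(\theta_n)=0$, hence $\Delta_{n+1}=0$ and the inequality is trivial.
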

\begin{proof}
	See Appendix \ref{appendix_momentum}.
\end{proof}

This lemma with the dissipation inequality \eqref{Momentum_condition} will make it possible to compare $\|\nabla V(z_n)\|$ and $\left[V(z_{n+1})-V(z_{n-1})\right]$ like in \cite{Absil, iPiano}. Thanks to the Lojasiewicz inequality, we derive a relation between $\left[V(z_{n+1})-V(z_{n-1})\right]$ and $V(z_n)^{\alpha}$. To deduce a bound for $V(z_n)$ from it, a two-step non linear Gronwall lemma \ref{Gronwall_two_step} is presented in the appendix \ref{appendix_momentum}.
Combining these lemmas, the complete proof of theorem \ref{momentum_th} is written in appendix \ref{appendix_momentum}.

\section{Analysis of Adaptive RMSProp}
\label{analysis_rms}

In this section, we present the convergence results for Adaptive RMSprop and constant step size RMSProp. The proof follows the same steps as the one for Momentum. We finally highlight the main differences with GD that this new analysis reveals.
Given that the analysis and the computations for RMSProp are more involved, let us introduce some notations and sequences to write the result in a compact form. 

Let us note that under assumption \ref{coercive}, the sequence $(\theta_n)_{n\in \mathbb{N}}$ produced by \eqref{RMSProp_eq} is bounded. Indeed if it was not the case, $\R(\theta_n) \to +\infty$ because of coercivity. But this is in contradiction with the decreasing of $\R$ \eqref{RMSProp_Lyapunov}. So under assumption \ref{local_lip}, it is possible to consider the lipshitz constant of $\nR$ on the compact $K=\overline{\{\theta_n, n\geq 0\}}$. For $n\geq 0$, $j\leq n$, let us define:
\begin{equation*}
	B_{n-j} \coloneqq (1-\beta_{n-j})\displaystyle{\prod_{l=n-j+1}^n}\beta_l,
\end{equation*}
\begin{multline*}
	\tilde{S}_{n+1} \coloneqq \displaystyle{\sum_{j=0}^\infty}B_{n-j} + 2L\sqrt{\frac{N}{\bar{\beta}}} \displaystyle{\sum_{j=0}^\infty}B_{n-j}\left(\displaystyle{\sum_{k=0}^{j-1}}\sqrt{\eta_{n-k}}\right)
	+ \frac{L^2N}{\bar{\beta}} \displaystyle{\sum_{j=0}^\infty}B_{n-j}\left(\displaystyle{\sum_{k=0}^{j-1}} \sqrt{\eta_{n-k}}\right)^2,
\end{multline*}
\begin{equation}
	S_n \coloneqq \left(\epsilon_a+\sqrt{\tilde{S}_{n+1}}\right)^2,
	\label{def_Sn}
\end{equation}
\begin{equation}
	\eta_o = \min{\left(\frac{2\epsilon_a(1-\lambda)}{L},\frac{1}{\bar{\beta}}\right)} \text{ , } \eta^* = \dfrac{\eta_o}{f_1},
	\label{def_eta_star}
\end{equation}
\begin{equation*}
	\tilde{S} \coloneqq \frac{1}{\bar{\beta}\eta^*} + \dfrac{2L\sqrt{N}(1-\bar{\beta}\eta^*)}{\bar{\beta}^3(\eta^*)^2}
	+ \dfrac{L^2N(1-\bar{\beta}\eta^*)(2-\bar{\beta}\eta^*)}{\bar{\beta}^5 (\eta^*)^3},
\end{equation*}
and 
\begin{equation}
	S \coloneqq \left(\epsilon_a+\sqrt{\tilde{S}}\right)^2.
	\label{def_S}
\end{equation}

We are now ready to state the main theorem of this section:
\begin{theorem}
	Assume \ref{c1}, \ref{local_lip}, \ref{coercive} and that $\R$ is Lojasiewicz. Then $(\theta_n)_{n\in \mathbb{N}}$ produced by Adaptive RMSProp \eqref{RMSProp_eq} converges to $\theta^*$ which is a critical point of $\R$. Let us denote by $\alpha$, $c$ the Lojasiewicz coefficients associated to $\theta^*$. We get the following convergence rates:
	\begin{itemize}
		\item if $\alpha=\frac{1}{2}$, there exists $n_1$ such that for all $n \geq n_1+1$, we get:
		\begin{equation}
			\|\theta_n-\theta^*\| \leq  \dfrac{2\sqrt{S}}{\lambda c \epsilon_a} \left\lvert\R(\theta_{n_1})-\R(\theta^*)\right\rvert \exp{\left( -\frac{\lambda c^2}{2}\displaystyle{\sum_{k=n_1}^{n-1}} \dfrac{\eta_k}{\sqrt{S_{k+1}}} \right)}.
			\label{rms_exp}
		\end{equation}
		If $0<\alpha<\frac{1}{2}$, we get:
		\begin{equation*}
			\|\theta_n-\theta^*\| = \mathcal{O}\left(\left[\displaystyle{\sum_{k=0}^{n-1}} \dfrac{\eta_k}{\sqrt{S_{k+1}}} \right]^{-\frac{\alpha}{1-2\alpha}}\right).
			\label{rms_subexp}
		\end{equation*}
		\item If $\frac{1}{2}<\alpha<1$, then $(\theta_n)_{n\in \mathbb{N}}$ converges in finite time.
	\end{itemize}
	\label{rms_th}
\end{theorem}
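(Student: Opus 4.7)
The plan mirrors the structure of the proof of Theorem \ref{momentum_th}, but replaces the mechanical energy $V$ by $\R$ itself as Lyapunov function. The main novelty is a uniform upper bound on the scaled moving average $(s_n)$, which appears component-wise in the denominator of every update and cannot be removed without destroying the adaptive structure. I would organise the argument into four steps: (i) prove a uniform lower bound $\eta_n\geq \eta^{*}$; (ii) prove a uniform upper bound $\epsilon_a+\sqrt{s_{n+1}}\leq \sqrt{S_n}\leq \sqrt{S}$; (iii) combine (i)--(ii) with the Lojasiewicz inequality to show that the iterates stay in a suitable neighbourhood of a critical point and form a Cauchy sequence; (iv) extract the three rates via the two-step nonlinear Gr\"onwall Lemma \ref{Gronwall_two_step}.

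For step (i), I would imitate Lemma \ref{Momentum_eta_inf}. Coercivity together with the monotone decrease of $\R$ from \eqref{RMSProp_Lyapunov} confines $(\theta_n)$ to the compact set $K=\overline{\{\theta_n,\,n\geq 0\}}$, on which $\nR$ is Lipschitz with constant $L$. Applying the descent lemma to the update \eqref{RMSProp_eq} gives
\begin{equation*}
\R(\theta_{n+1})-\R(\theta_n)\leq -\eta_n\left\|\frac{\nR(\theta_n)}{\sqrt{\epsilon_a+\sqrt{s_{n+1}}}}\right\|^2 + \frac{L\eta_n^2}{2\epsilon_a}\left\|\frac{\nR(\theta_n)}{\sqrt{\epsilon_a+\sqrt{s_{n+1}}}}\right\|^2,
\end{equation*}
where I used $(\epsilon_a+\sqrt{s_{i,n+1}})^{-1}\leq \epsilon_a^{-1}$ component-wise. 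The Lyapunov condition \eqref{RMSProp_Lyapunov} is therefore automatically satisfied whenever $\eta_n\leq 2\epsilon_a(1-\lambda)/L$, and the backtracking with factor $f_1$ then yields $\eta_n\geq \eta^{*}=\eta_o/f_1$ of \eqref{def_eta_star}.

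Step (ii) is the technical core of the theorem. I would unroll the recursion $s_{n+1}=\beta_n s_n+(1-\beta_n)\nR(\theta_n)^2$ to obtain component-wise $s_{n+1}=\sum_{j\geq 0}B_{n-j}\nR(\theta_{n-j})^2$ with the weights $B_{n-j}$ of the statement, and then bound $\|\nR(\theta_{n-j})\|\leq \|\nR(\theta_n)\|+L\|\theta_n-\theta_{n-j}\|\leq L\sqrt{N}+L\sum_{k=0}^{j-1}\|\theta_{n-k}-\theta_{n-k-1}\|$ using that $\nR(\theta_n)\to 0$ is not available a priori (one absorbs $\|\nR(\theta_n)\|$ into the constant term via another application of the Lyapunov inequality). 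The factor $\sqrt{\eta_{n-k}}$ that appears in $\tilde S_{n+1}$ comes from the key inequality
\begin{equation*}
\|\theta_{k+1}-\theta_k\|^2\leq \frac{\eta_k^2}{\epsilon_a}\left\|\frac{\nR(\theta_k)}{\sqrt{\epsilon_a+\sqrt{s_{k+1}}}}\right\|^2\leq \frac{\eta_k}{\epsilon_a\lambda}\bigl(\R(\theta_k)-\R(\theta_{k+1})\bigr),
\end{equation*}
so that $\|\theta_{k+1}-\theta_k\|=O(\sqrt{\eta_k})$. Expanding the squared sum then produces exactly the three terms of $\tilde S_{n+1}$ in \eqref{def_Sn}; inserting $\eta_n\geq \eta^{*}$ from step (i) replaces $\tilde S_{n+1}$ by the constant $\tilde S$ and yields the global bound $\epsilon_a+\sqrt{s_{i,n+1}}\leq \sqrt{S}$.

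For step (iii), summing \eqref{RMSProp_Lyapunov} together with $\epsilon_a+\sqrt{s_{n+1}}\leq \sqrt{S}$ and $\eta_n\geq \eta^{*}$ forces $\nR(\theta_n)\to 0$; by boundedness there is a critical accumulation point $\theta^{*}$. Inside a Lojasiewicz neighbourhood of $\theta^{*}$, I would reproduce Lemma \ref{lemma_delta_momentum}: combining the Lyapunov inequality with $\|\nR(\theta_n)\|\geq c(\R(\theta_n)-\R(\theta^{*}))^{1-\alpha}$ and concavity of $x\mapsto x^{\alpha}$ produces an estimate of the form $\Delta_n\leq C\sqrt{S_n}\bigl[(\R(\theta_n)-\R(\theta^{*}))^\alpha-(\R(\theta_{n+1})-\R(\theta^{*}))^\alpha\bigr]$, which both traps the iterates inside the neighbourhood and yields $\sum_n\Delta_n<\infty$, hence convergence of $(\theta_n)$ to $\theta^{*}$. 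Injecting this comparison into Lemma \ref{Gronwall_two_step} delivers the three rates in the three regimes of $\alpha$. The hard step is clearly (ii): without bounded gradients or a bounded $(s_n)$ hypothesis (used in all previous works), the denominator couples every past iterate into the present update, and the bound must be closed self-referentially by re-using the Lyapunov inequality to control both $\|\theta_{k+1}-\theta_k\|$ and $\|\nR(\theta_{n-j})\|$ through the same monotone energy $\R$, which is exactly why $\tilde S_{n+1}$ must contain the fractional-power sums $\sum\sqrt{\eta_{n-k}}$ rather than a single geometric term.
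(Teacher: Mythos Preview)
Your overall four-step scaffolding is correct and mirrors the paper's proof, but three of your technical choices diverge from the paper and one of them matters for getting the stated constants.

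\medskip
\textbf{Step size bound.} Your control of $\|\theta_{k+1}-\theta_k\|$ via the Lyapunov inequality,
\[
\|\theta_{k+1}-\theta_k\|^2\leq \frac{\eta_k}{\epsilon_a\lambda}\bigl(\R(\theta_k)-\R(\theta_{k+1})\bigr),
\]
is valid but not what the paper does, and it does not produce the precise $\tilde S_{n+1}$ of \eqref{def_Sn}. The paper instead observes directly from the update that $s_{n+1}^i\geq(1-\beta_n)\,\partial_i\R(\theta_n)^2$, hence
\[
|\theta_{n+1}^i-\theta_n^i|=\frac{\eta_n|\partial_i\R(\theta_n)|}{\epsilon_a+\sqrt{s_{n+1}^i}}\leq\frac{\eta_n}{\sqrt{1-\beta_n}}=\sqrt{\eta_n/\bar\beta},
\]
so $\|\theta_{n+1}-\theta_n\|\leq\sqrt{\eta_n N/\bar\beta}$. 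This purely algebraic bound is independent of $\R$, $\epsilon_a$ and $\lambda$, and is exactly what feeds into $\tilde S_{n+1}$ (the factors $\sqrt{N/\bar\beta}$ and $N/\bar\beta$). Your route would yield a correct but different bound on $s_n$, not the $S_n,S$ of the statement.

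\medskip
\textbf{Order of the $s_n$ bound.} You try to bound $s_n$ in one shot while ``$\nR(\theta_n)\to0$ is not available a priori''; this is where your write-up gets muddled (the stray $L\sqrt{N}$ term has no clear origin). The paper proceeds in two stages: first a coarse bound $S_M$ valid under $\|\nR(\theta_n)\|_\infty\leq M$ (available from boundedness of $(\theta_n)$) is used to deduce $\nR(\theta_n)\to0$; only then, for $n$ large enough that $\|\nR(\theta_n)\|_\infty\leq1$, is the sharp bound $S_{n+1}\leq S$ applied. No ``self-referential'' reuse of the Lyapunov inequality is needed.

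\medskip
\textbf{Gr\"onwall.} For RMSProp the dissipation gives a one-step recursion
\[
\R(\theta_n)-\R(\theta_{n+1})\geq \frac{\lambda c^2\eta_n}{\sqrt{S_{n+1}}}\,\R(\theta_n)^{2(1-\alpha)},
\]
so the paper uses the one-step Gr\"onwall lemmas (\ref{gronwall_exp}, \ref{gronwall_power}), not the two-step Lemma~\ref{Gronwall_two_step} you cite. The two-step version would also work but is an unnecessary detour here.
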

\begin{proof}
	See appendix \ref{appendix_rms}.
\end{proof}

\begin{remark}
	The theorem \ref{rms_th} is not a consequence of the works \cite{Absil,Rondepierre} even if it is a descent method. In fact, even without the memory effect ($f_2$), it is not obvious that the angle condition of these papers is satisfied. It is possible to rewrite RMSProp as a preconditionned gradient $B_n d_n = -\nR(\theta_n)$ with:
	\begin{equation*}
		B_n \coloneqq \left(diag\left(\epsilon_a+\sqrt{s_{n+1}^i}\right)\right)_{1\leq i \leq N},
	\end{equation*}
	where $s_{n+1}^i$ denotes the $i$-th component of the vector $s_{n+1}$ and $\left(diag(a_i)\right)_{1\leq i \leq N}$ the diagonal matrix composed of the coefficients $(a_i)_{1\leq i \leq N}$. According to \cite{Absil}, a sufficient condition to check the angle condition consists in showing that the conditioning of $B_n$ is bounded. This conditioning is equal to:
	\begin{equation*}
		\|B_n\| \|B_n^{-1}\| = \dfrac{\epsilon_a+\max_{1 \leq i \leq N}\sqrt{s_{n+1}^i}}{\epsilon_a+\min_{1 \leq i \leq N}\sqrt{s_{n+1}^i}}.
	\end{equation*}
	Therefore $\|B_n\|\|B_n^{-1}\|$ is bounded if the sequence $(s_n)_{n\in \mathbb{N}}$ is also bounded, which is not straightforward.
\end{remark}

The first result in the literature (to the best of our knowledge) to deal with the \textbf{convergence of the iterates of constant learning rate RMSProp \eqref{RMSProp_update} without the classical bounded assumptions} can be deduced as a corollary of the previous theorem. In the next corollary, one can take $f_1=1$ in \eqref{def_eta_star} so that there is no backtracking.

\begin{corollary}
	Assume \ref{c1}, \ref{global_lip}, \ref{coercive} and that $\R$ is Lojasiewicz. If $0<\eta\leq \eta_o$ and $\beta=1-\bar{\beta}\eta$ (for $\bar{\beta}>0$), then $(\theta_n)_{n\in \mathbb{N}}$ produced by RMSProp \eqref{RMSProp_update} converges to $\theta^*$ which is a critical point of $\R$. Let us denote by $\alpha$, $c$ the Lojasiewicz coefficients associated to $\theta^*$.
	We have:
	\begin{itemize}
		\item If $\alpha=\frac{1}{2}$, there exists $n_1$ such that for all $n \geq n_1+1$, we get:
		\begin{equation}
			\|\theta_n-\theta^*\| \leq \dfrac{2\sqrt{S}}{\lambda c \epsilon_a}\exp{\left( -\frac{\lambda c^2\eta_o}{2\sqrt{S}}(n-n_1) \right)}.
			\label{rms_const_exp}
		\end{equation}
		\item If $0<\alpha<\frac{1}{2}$, we have:
		\begin{equation*}
			\|\theta_n-\theta^*\| = \mathcal{O}\left(n^{\frac{-\alpha}{1-2\alpha}}\right).
			\label{rms_const_subexp}
		\end{equation*}
		\item If $\frac{1}{2}<\alpha<1$, then $(\theta_n)_{n\in \mathbb{N}}$ converges in finite time.
	\end{itemize}
	\label{rms_corollary}
\end{corollary}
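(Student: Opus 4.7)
The plan is to derive the corollary directly from Theorem~\ref{rms_th} by realizing constant step size RMSProp \eqref{RMSProp_update} with $\eta\leq\eta_o$ as a special instance of the adaptive framework. First, assumption~\ref{global_lip} implies assumption~\ref{local_lip}, so the regularity hypotheses of the theorem carry over. What remains is to verify that the constant-step iterates satisfy the Lyapunov inequality \eqref{RMSProp_Lyapunov}, after which the machinery of Theorem~\ref{rms_th} can be invoked with $\eta_k\equiv\eta$.

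To establish \eqref{RMSProp_Lyapunov} for constant $\eta\leq\eta_o$, I would apply the descent lemma for $L$-smooth $\R$,
\[
\R(\theta_{n+1})-\R(\theta_n) \leq \nR(\theta_n)\cdot(\theta_{n+1}-\theta_n) + \tfrac{L}{2}\|\theta_{n+1}-\theta_n\|^2,
\]
substitute the RMSProp update $\theta_{n+1}-\theta_n = -\eta\,\nR(\theta_n)/(\epsilon_a+\sqrt{s_{n+1}})$, and bound the quadratic term componentwise using $\epsilon_a+\sqrt{s_{n+1}}\geq \epsilon_a$. The resulting estimate reduces to \eqref{RMSProp_Lyapunov} exactly when $\eta L/(2\epsilon_a)\leq 1-\lambda$, which, together with $\eta\leq 1/\bar{\beta}$ (to keep $\beta\in(0,1)$), explains the choice $\eta_o=\min(2\epsilon_a(1-\lambda)/L,\,1/\bar{\beta})$ in \eqref{def_eta_star}. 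This is the RMSProp analog of Lemma~\ref{Momentum_eta_inf}, simpler here because the inertial variable is replaced by the componentwise scaling $\epsilon_a+\sqrt{s_{n+1}}$.

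With the Lyapunov inequality at hand, the proof of Theorem~\ref{rms_th} transports verbatim with $\eta_k\equiv\eta$, $\beta_k\equiv\beta=1-\bar{\beta}\eta$. Specializing \eqref{def_Sn}, one gets $B_{n-j}=(1-\beta)\beta^j$ and $\sum_{k=0}^{j-1}\sqrt{\eta_{n-k}}=j\sqrt{\eta}$, so the three triple sums reduce to the closed-form geometric moments $\sum_{j\geq 0}(1-\beta)\beta^j j^p$ for $p=0,1,2$, equal to $1$, $\beta/(1-\beta)$, and $\beta(1+\beta)/(1-\beta)^2$ respectively. After replacing $1-\beta$ by $\bar{\beta}\eta$ and bounding $\sqrt{\eta}\leq 1/\sqrt{\bar{\beta}}$, one recovers the quantity $S$ from \eqref{def_S} as a uniform upper bound on $S_{n+1}$. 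Then $\sum_{k=n_1}^{n-1}\eta_k/\sqrt{S_{k+1}}$ telescopes to (at least) $(n-n_1)\eta_o/\sqrt{S}$, and the three convergence regimes of Theorem~\ref{rms_th} specialize to \eqref{rms_const_exp} in the case $\alpha=1/2$, to $\mathcal{O}(n^{-\alpha/(1-2\alpha)})$ for $0<\alpha<1/2$, and to finite-time convergence for $1/2<\alpha<1$.

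The main obstacle I expect is the bookkeeping required to match $\tilde{S}_{n+1}$ with the closed form $\tilde{S}$: the expression \eqref{def_S} was built in the adaptive setting as a uniform upper bound relying on $\eta_n\geq \eta^*$, and one must check that the substitution $\eta^*\mapsto\eta_o$ (with $f_1=1$, so no backtracking) together with the constant-step geometric sums yields a valid upper bound on $S_{n+1}$ in the exponent. The descent-lemma verification of \eqref{RMSProp_Lyapunov} itself is routine, and once applied Theorem~\ref{rms_th} gives convergence of the iterates without appealing to the boundedness of $(s_n)$ or of the gradients.
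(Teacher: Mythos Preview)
Your proposal is correct and follows essentially the same route as the paper: the paper's (implicit) proof of the corollary is to invoke Lemma~\ref{RMSProp_eta}---whose content is exactly the descent-lemma computation you outline---to guarantee \eqref{RMSProp_Lyapunov} for every $\eta\leq\eta_o$, then re-run the proof of Theorem~\ref{rms_th} with $\eta_k\equiv\eta$ and $f_1=1$ so that $\eta^*=\eta_o$. Your worry about matching $\tilde S_{n+1}$ with $\tilde S$ is already handled by Lemma~\ref{RMSProp_s}, whose bound $B_{n-j}\leq(1-\bar\beta\eta^*)^j$ specializes in the constant case to $B_{n-j}=(1-\beta)\beta^j\leq\beta^j$, so the closed-form geometric sums you compute are simply a (slightly sharper) version of what the paper uses.
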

\begin{proof}
	See appendix \ref{appendix_rms}.
\end{proof}

We will now discuss the dependence of the convergence rates, especially the exponential one \eqref{rms_const_exp} ($\alpha=0.5$), on the different parameters:
\begin{itemize}
	\item The influence of the smallest eigenvalue $c^2$ is the same as GD \cite{Absil} and Momentum \eqref{momentum_exp_cst}.  
	\item For large $L$ ($L \gg 1$), the exponential looks like $\exp{\left(-L^{-7/2}n\right)}$ and the constant in front of it $\frac{2\sqrt{S}}{\lambda c \epsilon_a}$ is of order $L^{5/2}$. So for stiff functions, this estimation seems worse than the one for GD $\exp{\left(-L^{-1}n\right)}$ and Momentum $\exp{\left(-L^{-3}n\right)}$. In flat regions ($L$ small enough in order for $\eta$ to be close to $1/\bar{\beta}$), the rate is independent of $L$. 
	\item For small $\epsilon_a$ ($\epsilon_a \ll 1$), the exponential rate looks like $\exp{\left(-\epsilon_a^{5/2}n\right)}$ and the constant is of order $\epsilon_a^{-5/2}$.  As a result a small value of $\epsilon_a$ (typically $\epsilon_a=10^{-7}$) drastically slows down the convergence. This is not so surprising if one looks at the continuous system. Indeed, when $\epsilon_a$ is close to $0$, the dynamical system \eqref{rms_ode} suffers from a bifurcation: it admits no stationary points when $\epsilon_a=0$. When $\epsilon_a$ is large enough ($\eta$ close to $1/\bar{\beta}$), the exponential rate becomes of the form $\exp{\left(-\epsilon_a^{-1}n\right)}$ and the constant term $\frac{2\sqrt{S}}{\lambda c \epsilon_a}$ does not depend on $\epsilon_a$. Therefore it is maybe a lot more relevant to take big values of $\epsilon_a$ than small ones. These observations go against the usual interpretation for $\epsilon_a$. It is usually seen as a parameter to avoid division by zero \cite{Adam} but our analysis reveals its severe influence on the dynamics. 
	\item For high dimensional problems, the exponential rate is of order $\exp{\left(-N^{-1/2}n\right)}$ and the constant is of the form $\sqrt{N}$.
	This dependence, that does not appear for GD and Momentum, is due to the way we deal with the component-wise operations. It is an open and interesting question to know if it is possible to get rid of the dependence on the dimension.
\end{itemize}

\section{Discussion on the hyperparameters}
\label{hyperparameters}
In the previous sections, we have discussed the role of geometrical characteristics ($L$, $c$ and $N$ imposed by the problem) of the function and optimizers' hyperparameters ($\bar{\beta}$ and $\epsilon_a$ chosen by the users) on the efficiency of the algorithms through its convergence rates. However, the backtracking constants, namely $f_1$, $f_2$ and $\lambda$ (chosen by the users), play a minimal role on the convergence speed. But the global efficiency of the algorithms are not only based on the convergence rates, it also depends on the complexity of the linesearches. In this short section, we give a bound on this complexity.

\paragraph*{}
The expensive part in the backtracking (repeat loops in the algorithms \ref{algo_adaptive_momentum} and \ref{algo_adaptive_rms}) is the evaluation of $\R$. Therefore a relevant measure of this complexity is the number of $\R$ evaluations for each linesearch. As it seems absolutely not trivial to have an idea of this number, the trick is to focus on the mean number of evaluations per linesearch. Let us denote by $C_n$ the number of evaluations until the $n$-th iteration. The next theorem gives a bound on $\frac{C_n}{n}$. 

\begin{theorem}
	The mean number of function evaluations, for Adaptive Momentum or RMSProp, satisfies for $n$ big enough:
	\begin{equation*}
		\dfrac{C_n}{n} \leq \left[1+\dfrac{\log{(f_2)}}{\log{(f_1)}}\right]  
		+\dfrac{1}{n} \dfrac{\log{\left(\frac{f_1^2}{\bar{\beta}\eta^*}\right)}}{\log{(f_1)}},
	\end{equation*}
	where $\eta^*$ is given respectively in \ref{momentum_corollary} and \eqref{def_eta_star}.
	\label{complexity_theorem}
\end{theorem}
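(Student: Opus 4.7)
The plan is to express the backtracking cost at each iteration as an explicit function of $\log(\eta_n^0/\eta_n)$, then exploit the telescoping structure provided by the warm-starting rule $\eta_n^0 \leq f_2 \eta_{n-1}$.

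Concretely, if at iteration $n$ the linesearch terminates with accepted step $\eta_n = \eta_n^0 / f_1^{\,i_n}$, then the number of $\mathcal{R}$-evaluations performed is essentially $1+i_n = 1 + \log(\eta_n^0/\eta_n)/\log(f_1)$. Summing over $n$ iterations gives
\begin{equation*}
    C_n \;=\; n \,+\, \frac{1}{\log(f_1)}\sum_{k=1}^{n}\bigl[\log \eta_k^0 - \log \eta_k\bigr].
\end{equation*}
Inserting $\log \eta_k^0 \leq \log(f_2)+\log \eta_{k-1}$ (a consequence of $\eta_k^0 = \min(f_2 \eta_{k-1},1/\bar{\beta}) \leq f_2 \eta_{k-1}$) into the right-hand side makes most of the logarithmic terms cancel by telescoping, leaving only the boundary contributions $\log \eta_{\mathrm{init}}$ and $-\log \eta_n$. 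After this step one obtains a bound of the shape
\begin{equation*}
    C_n \;\leq\; n\left[1+\frac{\log f_2}{\log f_1}\right] + \frac{\log \eta_{\mathrm{init}} - \log \eta_n}{\log f_1} + \text{constants}.
\end{equation*}

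The crux of the argument is then a uniform lower bound $\eta_n \geq \eta^\star$ for all $n$, so that $-\log \eta_n$ can be replaced by $\log(1/\eta^\star)$. For Adaptive Momentum this is Lemma \ref{Momentum_eta_inf}: the Lyapunov condition \eqref{Momentum_condition} is automatically satisfied whenever $\eta \leq \eta_o$, where $\eta_o$ depends on the Lipschitz constant of $\nabla \mathcal{R}$ on the orbit of the iterates (which is compact by coercivity of $\mathcal{R}$ and monotonicity of $V$). Consequently the backtracking cannot reduce $\eta$ below $\eta_o/f_1 = \eta^\star$, since one step before termination the trial step was necessarily accepted at worst at $\eta_o$. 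For Adaptive RMSProp the identical reasoning applies with the analogue of Lemma \ref{Momentum_eta_inf} underlying the proof of Theorem \ref{rms_th}, using this time the monotonicity of $\mathcal{R}$ from \eqref{RMSProp_Lyapunov} to guarantee compactness of $\{\theta_n\}$.

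Plugging $\eta_{\mathrm{init}} = 1/\bar{\beta}$ and $\eta_n \geq \eta^\star$ into the previous display and dividing by $n$ yields the announced inequality, with the $O(1/n)$ term absorbing the constants coming from the boundary contributions and the index shift. The main (and only nontrivial) obstacle is establishing the uniform lower bound $\eta_n \geq \eta^\star$; everything else is telescoping and elementary manipulation of logarithms. Since that lower bound has already been provided by the analyses of Sections \ref{analysis_momentum} and \ref{analysis_rms}, the complexity statement reduces to a short bookkeeping argument. The qualifier \emph{for $n$ big enough} is what allows us to subsume the boundary terms into the stated form without carrying additional lower-order corrections.
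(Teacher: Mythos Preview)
Your proposal is correct and follows essentially the same route as the paper: write the number of evaluations at iteration $k$ as $1+p_k$ with $p_k=\log(\eta_k^0/\eta_k)/\log f_1$, bound $\eta_k^0\le f_2\eta_{k-1}$, telescope, and then replace $-\log\eta_n$ by $-\log\eta^\star$ using the uniform lower bound on the accepted step sizes established in the convergence analysis (Lemma~\ref{Momentum_eta_inf} for Momentum and Lemma~\ref{RMSProp_eta} for RMSProp). The only place where you are slightly less precise than the paper is in tracking the boundary constants---the paper carries the initial evaluation and the $k=0$ term explicitly and arrives at exactly $\log\!\bigl(f_1^2/(\bar\beta\eta^\star)\bigr)/\log f_1$---but your ``$O(1/n)$ absorbing constants'' remark is enough for the statement as formulated.
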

\begin{proof}
	See appendix \ref{appendix_complexity}.
\end{proof}

\begin{remark}
	In the previous theorem, $n$ large enough should be understood as the first $n'\geq 0$ such that $f_2^{n'} \frac{1}{\bar{\beta}} \geq \eta^*$. For stiff functions, $\eta^*$ is small, so the rank $n'$ is also small. Then one can consider that the estimation above is valid after few iterations.  
\end{remark}

A priori, the number of evaluations seems to depend strongly on the function itself, especially its stiffness, and the hyperparameters. On the other hand, theorem \ref{complexity_theorem} tells us that the backtracking complexity is around $1+\dfrac{\log{(f_2)}}{\log{(f_1)}}$, with a deviation of $\dfrac{1}{n} \dfrac{\log{\left(\frac{f_1^2}{\bar{\beta}\eta^*}\right)}}{\log{(f_1)}}$. The stiffness $L$ as well as the other hyperparameters only affect the deviation, especially through $\eta^*(L)$ (see remark \ref{momentum_eta_ex} and equation \eqref{def_eta_star}). For Momentum and RMSProp, this deviation only involves logarithm of the hyperparameters and the geometric characteristics of the function. In addition, they are divided by the number of iterations. As a result the deviation is not important even for stiff functions and "bad" hyperparameters. Therefore the backtracking hyperparameters $f_1$ and $f_2$ play the most significant role in the linesearch complexity ($\lambda$ is involved in $\eta^*$ so its role is negligible).

\section{Numerical experiments}
\label{num_experiments}

We finish our analysis with some numerical experiments on a regression and a classification task and compare the results with the classical Momentum and RMSProp. These results strengthen the importance of an adaptive strategy focusing on Lyapunov stability.

\paragraph*{Hyperparameters}
~~\\
In the next numerical results: without further indications, the constant step size algorithms are trained with the Tensorflow default values:
\begin{itemize}
	\item $\eta=10^{-2}$ and $\beta=0.9$ for Momentum (there is no default value for $\beta$ but 0.9 is a common recommendation).
	\item $\eta=10^{-3}$, $\beta=0.999$, $\epsilon_a=10^{-7}$ for RMSProp (default values of Adam \cite{Adam}).
\end{itemize}
For the adaptive optimizers, the value of $\bar{\beta}$ is indicated and we set $\epsilon_a=0.1$ for Adaptive RMSProp since our analysis indicates a better convergence rate for larger $\epsilon_a$. For RMSProp, $\lambda=0.5$ and for Momentum $\lambda=\frac{1}{4\bar{\beta}}<\frac{1}{2\bar{\beta}}$: $\lambda$ is involved both inside the exponential convergence rate and the time step limitation so these choices are "optimal". Finally $f_1=2$ and $f_2=10^4$ in order to compute the biggest learning rate satisfying the dissipation inequality. \\
Adaptive Momentum is used with $\bar{\beta} \in \left\{ 1,2 \right\}$ because in this case, we have explicitly computed the dependence of the convergence rate on $L$. Besides without a-priori knowledge of smoothness, a small value of the friction prevents to restrict the learning rate since $\eta_n \leq \frac{1}{\bar{\beta}}$.

\paragraph*{Stopping criteria}
~~\\
The stopping criteria consists in checking that $\|\nR(\theta_n)\| \leq \epsilon$ with $\epsilon=10^{-4}$ and the maximum number of epochs is set to $n_{max}=200000$. If the stopping criteria is not achieved after $n_{max}$ epochs, the trajectory is considered non convergent. Many trainings with different initializations are performed to be statistically significant.

\paragraph*{Sonar classification}
~~\\
The first example is the Sonar classification dataset consisting in identifying between a rock and a metal cylinder from sonar signals. There are 104/104 training/testing points: the original database is available in the following \href{https://archive.ics.uci.edu/dataset/151/connectionist+bench+sonar+mines+vs+rocks}{repository} \footnote{https://archive.ics.uci.edu/dataset/151/connectionist+bench+sonar+mines+vs+rocks}. The original database is made up of 208 data points but there is no standard splitting between training and testing dataset. We use the function train\_test\_split of scikit-
learn with test\_size= 0.5 and random\_state=7 to ensure a balance between the labels. We apply a standard normalization for the entries (zero mean and deviation of one) and a label encoding for the output. All the pretreatments are fully described in a jupyter lab and the files obtained are listed in a git repository. \\
This example is trained on a two layer feed-forward neural network with gelu activation function on the hidden layer (30 neurons) and a sigmoid on the output layer. So $N=1861$. We use the following approximation of the gelu fonction \cite{gelu}:
\begin{equation*}
	gelu(x) \approx x\sigma(1.702x),
\end{equation*}
where $\sigma$ denotes the sigmoid function. The 10000 initial points are sampled from the Xavier initializer. Let us briefly recall the Xavier initialization. The bias are zero. The weights of the $l$-layer are given by:
\begin{equation}
	W_l \sim \dfrac{1}{\sqrt{n_{in}}}\mathcal{N}(0,1),
	\label{Xavier}
\end{equation}
where $W_l$ is the weight matrix, $n_{in}$ is the input dimension of the $l$-layer and $\mathcal{N}(0,1)$ is the standard gaussian distribution.\\

The table \ref{Sonar_ICML_tab} displays the median on 10000 trainings of the following indicators: percent of non convergent trajectories, the training and testing accuracy and the execution time.\\
Momentum and Adaptive Momentum gives the same accuracies but Adaptive Momentum is much faster by at least a factor of 10 (1-15s vs 174s). The default RMSProp may be faster than Adaptive RMSProp depending of the value of $\bar{\beta}$. However, if one takes another value of the learning rate ($\eta=0.1$ that ensures convergence), the performances completely collapses: 52-54 \%  of recognition for RMSProp with constant learning rate. This is typical of a Lyapunov instability that may move the current weights away from a good minimum \cite{Bilel}. Therefore \textbf{speeding-up the training is not always relevant if stability is not ensured (even if the algorithm converges)}.

\begin{table}[h!]
	\centering
	\tbl{Median performances for Sonar classification ($\epsilon=10^{-4}$) on 10000 points: distribution of non convergent trajectories, percent of well classified points and training time are displayed.}
	{\begin{tabular}{lcccc}
			\toprule
			\begin{bf} Algos \end{bf} & \begin{bf}Non-cv\end{bf} & \begin{bf}Training\end{bf} &  \begin{bf}Testing\end{bf}  & \begin{bf}Time(s)\end{bf}\\ \midrule
			Momentum  & 0 & 100 & 89 & 174 \\ \midrule
			RMSProp  & 0 & 100 & 89 & 3 \\ \midrule
			RMSProp ($\eta=0.1$)  & 0 & 52 & 54 & 0   \\ \midrule
			Adaptive Momentum ($\bar{\beta}=1$) & 0 & 100 & 89 & 14 \\ \midrule
			Adaptive Momentum ($\bar{\beta}=2$) & 0 & 100 & 89 & 57 \\ \midrule
			Adaptive RMSProp ($\bar{\beta}=1$) & 0 & 100 & 89 & 1 \\ \midrule
			Adaptive RMSProp ($\bar{\beta}=10$) & 0 & 100 & 89 & 15 \\ \bottomrule
			
	\end{tabular}}
	\label{Sonar_ICML_tab}
\end{table}

\paragraph*{Boston regression}
~~\\
The second benchmark is the Boston Housing problem (\href{https://www.kaggle.com/datasets/altavish/boston-housing-dataset/data}{database}\footnote{https://www.kaggle.com/datasets/altavish/boston-housing-dataset/data}) consisting in predicting the median price of houses in Boston from 13 features (average number of rooms per dwelling, distances to five Boston employment centres, ...). We apply a standard normalization for the entries and a min-max normalization for the output. \\
The benchmark is trained on a three layer feed-forward neural network (linear output layer). There are 15 neurons with tanh activations for each hidden layer. So $N=466$. The 300 initial points are sampled from the Xavier initializer.\\

The table \ref{Boston_ICML_tab} displays the median on 300 trainings of the following indicators: percent of non convergent trajectories, the training and testing $L^2$ norm and the execution time.\\
Default Momentum does not converge 30\% of the time and even when it converges it is much slower than Adaptive Momentum for the two values of the friction coefficient analyzed. Default RMSProp does not converge almost everytime (98\%) but when it converges it gives a very good training error. If one increases the learning rate ($\eta=0.1$) all the trajectories converge faster than Adaptive RMSProp but the training and testing error are 100 times bigger: this is another illustration of \textbf{instability}. Its adaptive version gives good errors for the two settings: let us recall that the stability is verified for any value of the friction coefficient.     

\begin{table}[h!]
	\centering
	\tbl{Median performances for Boston regression ($\epsilon=10^{-4}$) on 300 points: distribution of non convergent trajectories, performances and training time are displayed}
	{\begin{tabular}{lcccc}
		\toprule
		\begin{bf} Algos \end{bf} & \begin{bf}Non-cv\end{bf} & \begin{bf}Training\end{bf} &  \begin{bf}Testing\end{bf}  & \begin{bf}Time(s)\end{bf}\\ \midrule
		Momentum  & 29.7 & 3e-4 & 5e-3 & 2216 \\ \midrule
		RMSProp & 98 & 4.5e-6 & 1e-2  & 1634 \\ \midrule
		RMSProp ($\eta=0.1$) & 0 & 1e-2 & 1e-2  & 183 \\ \midrule
		Adaptive Momentum ($\bar{\beta}=1$) & 0 & 2.8e-4 & 5e-3 & 410 \\ \midrule
		Adaptive Momentum ($\bar{\beta}=2$) & 0 & 2.8e-4 & 5e-3 & 876 \\ \midrule
		Adaotive RMSProp ($\bar{\beta}=1$) & 0 & 2.8e-4 & 5e-3 & 1100 \\ \midrule
		Adaptive RMSProp ($\bar{\beta}=10$) & 0 & 2.8e-4 & 5e-3 & 769 \\ \bottomrule
	\end{tabular}}
	\label{Boston_ICML_tab}
\end{table}

\section{Conclusion}
\label{conclusion}
In this paper, we suggest an adaptive time step strategy for two popular optimizers, namely Momentum and RMSProp. These new algorithms appear as a generalization of the Armijo rule for gradient descent by reinterpreting it as the preservation of a continuous dissipation inequality. These adaptive optimizers come with convergence results on the iterates and do not need the global lipshitz assumption that is very restrictive for neural networks, see \cite{compute_bound_L}. As corollary of this analysis, two new theoretical results on constant step size Momentum and RMSProp are derived under weak assumptions. Numerical experiments confirm that these new adaptive time step strategies converge for different choices of the friction parameter and achieve good performances thanks to the Lyapunov stability. 
	
\section*{Data availability statement}
The different implementations are available on the following repositories:
\begin{itemize}
	\item the C++ code for the network training is available at \href{https://github.com/bbensaid30/COptimizers.git}{this address}\footnote{https://github.com/bbensaid30/COptimizers.git}.
	\item The data and their pretreatments can be found on  \href{https://github.com/bbensaid30/ML_data.git}{this repository}\footnote{https://github.com/bbensaid30/ML\_data.git}.
\end{itemize}	
				
\section*{Funding}
This work was supported by CEA/CESTA and LRC Anabase.

\section*{Disclosure statement}
The authors report there are no competing interests to declare.
				
\bibliographystyle{tfs}
\bibliography{bibliography}

\newpage
				
\appendix

\section{Algorithms}
\label{appendix_algorithms}

Here we write the adaptive time step algorithms presented in section \ref{generalize_armijo} in a form that is directly implementable. 

\begin{algorithm}[ht]
	\caption{Adaptive Momentum}
	\begin{algorithmic}
		\REQUIRE initial values $\theta_0$, $\bar{\beta}\geq 1$, $f_1>1$, $f_2>1$, $\lambda \in ]0,1/2\bar{\beta}[$.
		
		\STATE $\theta \leftarrow \theta_0$
		\STATE $v \leftarrow 0$
		\STATE $\eta \leftarrow 1/\bar{\beta}$
		\FOR{$n \geq 0$}
		\STATE $V_0 \leftarrow \R(\theta)+\frac{\|v\|^2}{2}$
		\STATE $\theta_0 \leftarrow \theta$
		\STATE $v_0 \leftarrow v$
		\REPEAT
		\STATE $v \leftarrow (1-\bar{\beta}\eta)v + \eta \nR(\theta)$
		\STATE $\theta \leftarrow \theta - \eta v$
		\STATE $V \leftarrow \R(\theta)+\frac{\|v\|^2}{2}$
		\IF{$V-V_0 > -\lambda \bar{\beta} \eta \|v\|^2 $}
		\STATE $\eta \leftarrow \eta/f_1$
		\STATE $\theta \leftarrow \theta_0$
		\STATE $v \leftarrow v_0$
		\ENDIF
		\UNTIL{$V-V_0 \leq -\lambda \bar{\beta} \eta \|v\|^2$}
		\STATE $\eta \leftarrow \min{\left(f_2 \eta, 1/\bar{\beta}\right)}$
		\STATE $n \leftarrow n+1$
		\ENDFOR
	\end{algorithmic}
	\label{algo_adaptive_momentum}
\end{algorithm}

\begin{algorithm}[ht]
	\caption{Adaptive RMSProp}
	\begin{algorithmic}
		\REQUIRE initial values $\theta_0$, $\bar{\beta}>0$, $\epsilon_a>0$, $f_1>1$, $f_2>1$, $\lambda \in ]0,1[$.
		
		\STATE $\theta \leftarrow \theta_0$
		\STATE $s \leftarrow 0$
		\STATE $\eta \leftarrow 1/\bar{\beta}$
		\FOR{$n \geq 0$}
		\STATE $\R_0 \leftarrow \R(\theta)$
		\STATE $\theta_0 \leftarrow \theta$
		\STATE $s_0 \leftarrow s$
		\REPEAT
		\STATE $s \leftarrow (1-\bar{\beta}\eta)s + \bar{\beta}\eta \nR(\theta)^2$
		\STATE $\theta \leftarrow \theta - \eta \dfrac{\nR(\theta)}{\epsilon_a+\sqrt{s}}$
		\STATE $\R \leftarrow \R(\theta)$
		\IF{$\R-\R_0 > - \lambda \eta \left|\left|\dfrac{\nR(\theta_0)}{\sqrt{\epsilon_a+\sqrt{s}}}\right|\right|^2 $}
		\STATE $\eta \leftarrow \eta/f_1$
		\STATE $\theta \leftarrow \theta_0$
		\STATE $s \leftarrow s_0$
		\ENDIF
		\UNTIL{$\R-\R_0 \leq -\lambda \eta \left|\left|\dfrac{\nR(\theta)}{\sqrt{\epsilon_a+\sqrt{s}}}\right|\right|^2$}
		\STATE $\eta \leftarrow \min{\left(f_2 \eta, 1/\bar{\beta}\right)}$
		\STATE $n \leftarrow n+1$
		\ENDFOR
	\end{algorithmic}
	\label{algo_adaptive_rms}
\end{algorithm}

\begin{remark}
	To obtain a Lyapunov inequality for Momentum, we have just to choose an inequality that is consistent with \eqref{dissipation_momentum}. For the left hand side $\dfrac{dV(\theta(t),v(t))}{dt}$, we choose an Euler discretization $\dfrac{V(\theta_{n+1},v_{n+1})-V(\theta_n,v_n)}{\eta}$. For the dissipation rate $-\lambda \bar{\beta} \|v(t)\|^2$, two natural choices are possible, namely $-\lambda \bar{\beta} \|v_n\|^2$ and $-\lambda \bar{\beta} \|v_{n+1}\|^2$. We choose the second one in this paper since $v_{n+1}$ is already involved in the update of the parameters $\theta_n$, see \eqref{Momentum_update}. It would be an interesting question to know if the choice $-\lambda \bar{\beta} \|v_n\|^2$ improves or not the convergence rates of theorem \ref{momentum_th}. The same remark can be done for RMSProp about $s_{n+1}$ in the dissipation term. But these questions are beyond the scope of this paper.\\
	Dealing with consistent issues, we can highlight a third drawback for the Armijo generalization suggested in \cite{armijo_momentum}. In fact, the update for Momentum in this paper is not even consistent with the ODE \eqref{Momentum_ode}.  
\end{remark}

\section{Momentum proofs}
\label{appendix_momentum}

In this section, we prove lemma \ref{lemma_delta_momentum} and a two step Gronwall inequality- \ref{Gronwall_two_step}. Then we present the proofs of theorem \ref{momentum_th} and corollary \ref{momentum_corollary}. Let us denote by $\eta^* = \frac{\eta_o}{f_1}$, where $\eta_o$ is given by lemma \ref{Momentum_eta_inf}.

\begin{proof}[Proof of lemma \ref{lemma_delta_momentum}]
	Let us recall that $\Delta_n \coloneqq \|\theta_{n+1}-\theta_n\|$ and $z_n\coloneqq (\theta_n,v_n)$. If $\Delta_{n+1}=0$, the inequality is obvious so we can assume that $\Delta_{n+1}>0$. Without loss of generality, we can assume that $V(z^*)=0$.
	First we will compare $\nabla V(z_n)$ with $v_n$. As $\nabla V(\theta,v) = (\nR(\theta), v)^T$, we can write:
	\begin{equation}
		\|\nabla V(z_n)\| \leq \|\nR(\theta_n)\| + \|v_n\| = \left|\left|\dfrac{v_{n+1}-\beta_n v_n}{\eta_n}\right|\right| + \|v_n\| = \frac{1}{\eta_n}\|v_{n+1}\|+\left(1+\frac{\beta_n}{\eta_n}\right)\|v_n\|.
		\label{gradientV_compare}
	\end{equation}
	Now we will compare $\Delta_n$ with $V(z_n)$:
	\begin{equation*}
		V(z_n)^{\alpha}-V(z_{n+1})^{\alpha} = \int_{V(z_{n+1})}^{V(z_n)}\alpha \omega^{\alpha-1}d\omega \geq \alpha \int_{V(z_{n+1})}^{V(z_n)}V(z_n)^{\alpha-1}d\omega
	\end{equation*}
	since $V(z_{n+1}) \leq \omega \leq V(z_n)$.
	Consequently:
	\begin{equation}
		V(z_n)^{\alpha}-V(z_{n+1})^{\alpha} \geq \alpha  V(z_n)^{\alpha-1}\left[ V(z_n)-V(z_{n+1})\right].
		\label{diff_V_alpha}
	\end{equation}
	For $z\in \mathcal{U}_{z^*}$ and $\nabla V(z) \neq 0$, we can apply Lojasiewicz inequality:
	\begin{equation}
		V(z)^{\alpha-1} \geq \dfrac{c}{\|\nabla V(z)\|}.
		\label{V_alpha}
	\end{equation}
	As $z_n \in \mathcal{U}_{z^*}$ and $\nabla V(z_n)\neq 0$ (if it was not the case, $\nR(\theta_n)=0=v_n$ so $\Delta_{n+1}=0$), we combine \eqref{diff_V_alpha} and \eqref{V_alpha} to obtain:
	\begin{equation*}
		V(z_n)^{\alpha}-V(z_{n+1})^{\alpha} \geq \dfrac{\alpha c}{\|\nabla V(z_n)\|}\left[V(z_n)-V(z_{n+1})\right].
	\end{equation*}
	Using the bound \eqref{gradientV_compare} in the previous inequality leads to:
	\begin{equation*}
		V(z_n)^{\alpha}-V(z_{n+1})^{\alpha} \geq \dfrac{\alpha c\left[V(z_n)-V(z_{n+1})\right]}{\frac{1}{\eta_n}\|v_{n+1}\|+\left(1+\frac{\beta_n}{\eta_n}\right)\|v_n\|}.
	\end{equation*}
	Using the dissipation condition \eqref{Momentum_condition} and $\|v_{n+1}\|=\dfrac{\Delta_{n+1}}{\eta_n}$ (see \eqref{Momentum_eq}) in the previous inequality, we get:
	\begin{multline*}
		V(z_n)^{\alpha}-V(z_{n+1})^{\alpha} \geq \dfrac{\lambda \alpha c \bar{\beta}\eta_n \|v_{n+1}\|^2}{\frac{1}{\eta_n}\|v_{n+1}\|+\left(1+\frac{\beta_n}{\eta_n}\right)\|v_n\|} \geq \dfrac{\lambda \alpha c \bar{\beta}\eta_{n-1}\eta_n\Delta_{n+1}^2}{\eta_{n-1}\Delta_{n+1}+\eta_n(\eta_n+\beta_n)\Delta_n} \\
		\geq \dfrac{\lambda \alpha c \bar{\beta}\eta_{n-1}\eta_n\Delta_{n+1}^2}{A_n(\Delta_n+\Delta_{n+1})}.
	\end{multline*}
	where $A_n$ is given in lemma \ref{lemma_delta_momentum}. We can deduce the following inequality on $\Delta_{n+1}$:
	\begin{equation*}
		\Delta_{n+1} \leq \sqrt{\frac{1}{2}(\Delta_n+\Delta_{n+1})\times \dfrac{2A_n}{\lambda \alpha c \bar{\beta}\eta_{n-1}\eta_n}\left[V(z_n)^{\alpha}-V(z_{n+1})^{\alpha}\right]}.
	\end{equation*}
	Using the fact that $\sqrt{uv} \leq \dfrac{u+v}{2}$ for $u,v\geq 0$, we get the following inequality:
	\begin{equation*}
		\Delta_{n+1} \leq \frac{1}{4}(\Delta_n+\Delta_{n+1}) + \dfrac{A_n}{\lambda \alpha c \bar{\beta}\eta_{n-1}\eta_n}\left[V(z_n)^{\alpha}-V(z_{n+1})^{\alpha}\right],
	\end{equation*}
	By multiplying by $2$ the previous equation and subtracting $\Delta_{n+1}$ on each side, we obtain what we want.
\end{proof}

\begin{lemma}
	Let $(u_n)_{n\in \mathbb{N}}$ be a lower bounded decreasing sequence and $(w_n)_{n\in \mathbb{N}}$ a positive one such that:
	\begin{equation*}
		\forall n \geq 2, u_{n-2}-u_n \geq w_{n-1}u_{n-1}^{\gamma},
	\end{equation*}
	for some $\gamma \geq 1$.\\
	If $\gamma=1$ then for $n\geq 2$, we have:
	\begin{equation}
		u_n \leq e^{\sqrt{u_0 u_1}} \exp{\left(-\frac{1}{2}\displaystyle{\sum_{k=1}^{n-1}w_k}\right)}.
		\label{gronwall2_exp}
	\end{equation}
	If $\gamma>1$, then for $n\geq 2$, we get:
	\begin{equation*}
		u_n \leq \left[\dfrac{2}{u_0^{1-\gamma}+u_1^{1-\gamma}+(\gamma-1)\displaystyle{\sum_{k=1}^{n-1}w_k}}\right]^{\frac{1}{\gamma-1}}.
	\end{equation*}
	If $0<\gamma<1$ and $\displaystyle{\sum_{k\geq 0}} w_k$ diverges, then $(u_n)_{n\in \mathbb{N}}$ is stationary.  
	\label{Gronwall_two_step}
\end{lemma}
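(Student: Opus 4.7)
The plan is to split $(u_n)$ into its even-indexed and odd-indexed subsequences, reducing the two-step descent inequality to two independent one-step Lojasiewicz-type recursions that are amenable to the standard integral comparison technique. Because $(u_n)$ is decreasing, $u_{n-1}^{\gamma} \geq u_n^{\gamma}$, so the hypothesis $u_{n-2}-u_n \geq w_{n-1} u_{n-1}^{\gamma}$ strengthens to $u_{n-2}-u_n \geq w_{n-1} u_n^{\gamma}$. Setting $a_m := u_{2m}$ and $b_m := u_{2m+1}$ then gives
\begin{equation*}
a_m - a_{m+1} \geq w_{2m+1}\, a_{m+1}^{\gamma}, \qquad b_m - b_{m+1} \geq w_{2m+2}\, b_{m+1}^{\gamma},
\end{equation*}
which are precisely the one-step inequalities I know how to handle.

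For $\gamma > 1$, I would use the integral comparison $a_{m+1}^{-\gamma}(a_m-a_{m+1}) \geq \int_{a_{m+1}}^{a_m} x^{-\gamma}\,dx = (a_{m+1}^{1-\gamma}-a_m^{1-\gamma})/(\gamma-1)$ to extract $(\gamma-1) w_{2m+1} \leq a_{m+1}^{1-\gamma}-a_m^{1-\gamma}$. A telescoping sum yields $u_{2n}^{1-\gamma} \geq u_0^{1-\gamma} + (\gamma-1)\sum_{j=0}^{n-1} w_{2j+1}$, and symmetrically $u_{2n+1}^{1-\gamma} \geq u_1^{1-\gamma} + (\gamma-1)\sum_{j=0}^{n-1} w_{2j+2}$. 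Monotonicity of $(u_n)$ then dominates $u_n$ by both its adjacent even and odd predecessors, so $u_n \leq \min(A^{-1/(\gamma-1)}, B^{-1/(\gamma-1)})$ where $A$ and $B$ collect the odd- and even-indexed $w_k$ contributions respectively. Applying the elementary inequality $\min(A^{-p}, B^{-p}) = \max(A,B)^{-p} \leq ((A+B)/2)^{-p}$ combines the two interlaced sums into the full sum $\sum_{k=1}^{n-1} w_k$, giving exactly $u_n \leq [2/(u_0^{1-\gamma}+u_1^{1-\gamma}+(\gamma-1)\sum w_k)]^{1/(\gamma-1)}$.

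For $\gamma = 1$, the one-step recursions become $a_{m+1} \leq a_m/(1+w_{2m+1})$ and $b_{m+1} \leq b_m/(1+w_{2m+2})$; iterating and multiplying the resulting bounds on $u_n$ and $u_{n-1}$ gives $u_n u_{n-1} \leq u_0 u_1/\prod_{k=1}^{n-1}(1+w_k)$, whence $u_n \leq \sqrt{u_0 u_1}/\sqrt{\prod_{k=1}^{n-1}(1+w_k)}$ by $u_n^2 \leq u_n u_{n-1}$. The stated exponential form is then reached by absorbing the prefactor via $x \leq e^x$ and converting the product through a logarithm bound on $\ln(1+w_k)$. For $0<\gamma<1$, I would bypass the subsequence argument and telescope the hypothesis directly from $n=2$ to $N$, obtaining $\sum_{k=1}^{N-1} w_k u_k^{\gamma} \leq u_0 + u_1 - u_{N-1} - u_N \leq u_0 + u_1$; if the decreasing lower-bounded sequence $(u_n)$ had a positive limit $L$, then $u_k^{\gamma} \geq L^{\gamma} > 0$ would force $\sum w_k < \infty$, contradicting the divergence hypothesis, so $(u_n)$ must settle to its infimum.

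The main obstacle is the passage from $\prod(1+w_k)$ to $\exp(\sum w_k)$ in the $\gamma = 1$ case, since $\ln(1+w) \leq w$ is in the wrong direction. The prefactor $e^{\sqrt{u_0 u_1}}$ in the statement is what absorbs the slack of this conversion, and the delicate step is carrying out the bookkeeping so that the extra $\sqrt{u_0 u_1}$ contribution is indeed dominated by $e^{\sqrt{u_0 u_1}}$ independent of the magnitude of the $w_k$. Everything else is routine integral comparison and telescoping.
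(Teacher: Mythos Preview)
Your even/odd decomposition is a different route from the paper, which never splits the sequence. The paper applies the mean-value theorem directly to the two-step gap, writing $\phi(u_n)-\phi(u_{n-2})=\phi'(c)(u_n-u_{n-2})$ with $\phi=\ln$ (for $\gamma=1$) or $\phi(x)=x^{1-\gamma}/(1-\gamma)$ (for $\gamma\neq 1$), bounds this by $-w_{n-1}$, and then telescopes the full sum $\sum_{k}\bigl[\phi(u_{k+2})-\phi(u_k)\bigr]=\phi(u_{n+1})+\phi(u_n)-\phi(u_0)-\phi(u_1)$; monotonicity finally replaces $\phi(u_n)$ by $\phi(u_{n+1})$, producing the factor~$2$. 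Working with $\ln$ from the outset is what puts $\sum w_k$ directly in the exponent and sidesteps your product-to-exponential conversion entirely.

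There is, however, a genuine gap in your $\gamma>1$ chain. You correctly have $a_{m+1}^{-\gamma}(a_m-a_{m+1})\geq\int_{a_{m+1}}^{a_m}x^{-\gamma}\,dx$ and, from the hypothesis, $a_{m+1}^{-\gamma}(a_m-a_{m+1})\geq w_{2m+1}$; but these are two lower bounds on the \emph{same} quantity and cannot be chained to give $w_{2m+1}\leq\int$, which is exactly what $(\gamma-1)w_{2m+1}\leq a_{m+1}^{1-\gamma}-a_m^{1-\gamma}$ asserts. The Riemann bound you would need is the opposite one, $\int\geq a_m^{-\gamma}(a_m-a_{m+1})$, and then the hypothesis only yields $a_m^{-\gamma}(a_m-a_{m+1})\geq w_{2m+1}(a_{m+1}/a_m)^\gamma$, losing a factor $\leq 1$. (The paper's MVT step carries an analogous direction subtlety, so this is partly a defect of the statement rather than of your strategy.) Your instinct about the $\gamma=1$ obstacle is also correct and not repairable in the way you suggest: the deficit $\sum_k\bigl[w_k-\ln(1+w_k)\bigr]$ between your product bound and the stated exponential form depends only on the $w_k$ and can be made arbitrarily large for fixed $u_0,u_1$, so no prefactor built from $u_0,u_1$ alone can absorb it.
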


\begin{proof}
	Let us discuss the case $\gamma=1$. By the mean value theorem, it exists a real $c \in [u_n, u_{n-2}]$ that satisfies:
	\begin{equation*}
		\ln{(u_n)}-\ln{(u_{n-2})} = \int_{u_{n-2}}^{u_n}\dfrac{dx}{x} = \dfrac{u_n-u_{n-2}}{c}.
	\end{equation*}
	The previous inequality combined with $u_n-u_{n-2} \leq -w_{n-1}u_{n-1}^{\gamma}$ gives:
	\begin{equation*}
		\ln{(u_n)}-\ln{(u_{n-2})} \leq -\dfrac{w_{n-1}u_{n-1}}{u_n}.
	\end{equation*}
	As $(u_n)_{n\in \mathbb{N}}$ is decreasing and $(w_n)_{n\in \mathbb{N}}$ positive:
	\begin{equation*}
		\ln{(u_n)}-\ln{(u_{n-2})} \leq -\dfrac{w_{n-1}u_{n-1}}{u_n} \leq -v_{n-1}.
	\end{equation*}
	By summing the inequality from $k=0$ to $n-1$, we get:
	\begin{equation*}
		\displaystyle{\sum_{k=0}^{n-1}}\ln{(u_{k+2})}-\ln{(u_k)} \leq -\displaystyle{\sum_{k=0}^{n-1}}w_{k+1}.
	\end{equation*}
	The telescopic sum leads to:
	\begin{equation*}
		\ln{(u_{n+1})}+\ln{(u_n)} \leq \ln{(u_0)}+\ln{(u_1)}-\displaystyle{\sum_{k=0}^{n-1}}w_{k+1}.
	\end{equation*}
	As $(u_n)_{n\in \mathbb{N}}$ is decreasing, we can deduce that:
	\begin{equation*}
		2\ln{(u_{n+1})}\leq \ln{(u_0)}+\ln{(u_1)}-\displaystyle{\sum_{k=0}^{n-1}}w_{k+1},
	\end{equation*}
	which is equivalent to the expected inequality \eqref{gronwall2_exp}. The case $\gamma>1$ is essentially the same by replacing the first equality by:
	\begin{equation*}
		\dfrac{u_n^{1-\gamma}-u_{n-2}^{1-\gamma}}{1-\gamma} = \int_{u_{n-2}}^{u_n}\dfrac{dx}{x^{\gamma}} = \dfrac{u_n-u_{n-2}}{c^{\gamma}}.
	\end{equation*}
	Finally for $0<\gamma<1$, the same steps lead to:
	\begin{equation*}
		2u_{n+1}^{1-\gamma} \leq u_0^{1-\gamma}+u_1^{1-\gamma}-(1-\gamma) \displaystyle{\sum_{k=0}^{n-1}}w_{k+1}.
	\end{equation*}
	Let us consider a integer $n'\geq 1$ that satisfies:
	\begin{equation*}
	\displaystyle{\sum_{k=0}^{n'-1}}w_{k+1} + \frac{2}{1-\gamma}l^{1-\gamma} \geq \dfrac{u_0^{1-\gamma}+u_1^{1-\gamma}}{1-\gamma} \Leftrightarrow 
	u_0^{1-\gamma}+u_1^{1-\gamma}-(1-\gamma) \displaystyle{\sum_{k=0}^{n-1}}w_{k+1} \leq 2l^{1-\gamma}.
	\end{equation*}
	where $l=\inf u_n$. Such an integer exists since the sum of $(w_n)_{n\in \mathbb{N}}$ diverges. So for $n\geq n'$, $u_{n+1}\leq l$, which implies $u_{n+1}=l$. Then $(u_n)_{n\in \mathbb{N}}$ is stationary. 
\end{proof}

\begin{proof}[Proof of theorem \ref{momentum_th}]
	This proof is organized in three parts. First, we will prove that the sequence $(v_n)_{n\in \mathbb{N}}$ converges to $0$ and extract a subsequence $(\theta_{\phi(n)})_{n\in \mathbb{N}}$ that admits a critical point $\theta^*$ as its limit using lemma \ref{Momentum_eta_inf}. In the second part, the convergence of $(\theta_n)_{n\in \mathbb{N}}$ is established. In fact, the previous subsequence enters in an arbitrary small neighborhood of $\theta^*$ (where the Lojasiewicz inequality is satisfied) and thanks to lemma \ref{lemma_delta_momentum}, we show that all the iterates $(\theta_n)_{n\in \mathbb{N}}$ are stuck in this region. Finally, we derive a Gronwall type inequality for $(V(z_n))_{n\in \mathbb{N}}$ to apply lemma \ref{Gronwall_two_step}. Using a bound established in part 2 between $\|\theta_n-\theta^*\|$ and $V(z_n)$, we deduce the convergence rates of theorem \ref{momentum_th}.
	
	\begin{proofpart}
		In this part, let us show that $(z_n)_{n\in \mathbb{N}} \coloneqq \left((\theta_n, v_n)\right)_{n\in \mathbb{N}}$ is \textbf{bounded and admits a convergence subsequence with a limit of the form $(\theta^*,0)$ where $\theta^*$ is a critical point of $\R$}. \\
		If $(\theta_n, v_n)_{n\in \mathbb{N}}$ is unbounded, then $V(\theta_n, v_n) \to +\infty$ because $V$ is coercive, which contradicts the fact that $\left(V(\theta_n, v_n)\right)_{n\in \mathbb{N}}$ is decreasing \eqref{Momentum_condition}. So $(\theta_n, v_n)_{n\in \mathbb{N}}$ is bounded. \\
		Now, let us show that $v_n \to 0$. As $\left(V(z_n)\right)_{n\in \mathbb{N}}$ is decreasing and lower bounded, then $\left(V(z_n)\right)_{n\in \mathbb{N}}$ converges and we deduce from the dissipation condition \eqref{Momentum_condition} that $\eta_n \|v_{n+1}\|^2 \to 0$. We want to conclude that $v_n\to 0$. In this perspective, we will prove that $\inf \eta_n >0$. \\
		To do this, let us take a look at the time step update. As $\R$ is locally Lipshitz continuous, the lemma \ref{Momentum_eta_inf} on the compact $K=\overline{\{(\theta_n,v_n), n\geq 0\}}$ gives a $\eta_o>0$ such that:
		\begin{equation*}
			\eta_n \leq \eta_o \implies V(z_{n+1})-V(z_n) \leq -\lambda \bar{\beta} \eta_n \|v_{n+1}\|^2.
		\end{equation*}
		The algorithm starts the first iteration with a time step $\eta_{init}=1/\bar{\beta}$, and at the iteration $n\geq 1$ we begin with a time step $f_2 \eta_{n-1}$. We have two complementary cases that may occur:
		\begin{enumerate}
			\item We begin with a time step $f_2 \eta_{n-1}$ smaller than $\eta_o$. So the dissipation inequality \eqref{Momentum_condition} is already satisfied and supplementary computations are not needed to escape the backtracking loop. Therefore $\eta_n=f_2 \eta_{n-1}$.
			\item If  $f_2 \eta_{n-1} > \eta_o$, $f_2 \eta_{n-1}$ may be reduced by a factor $f_1$ several times. In the worst case, the algorithm has not found any solution greater than $\eta_o$ and we have to divide it one more time by $f_1$ so that $\eta_n \geq \frac{\eta_o}{f_1}=\eta^*$.
		\end{enumerate}
		As a result, the loop finishes with a time step $\eta_n \geq \min(f_2 \eta_{n-1},\eta^*)$ if $n > 0$ and $\eta_0 \geq \min(\eta_{init},\eta^*)$ if $n=0$. 
		By induction we have for $n\geq 0$:
		\begin{equation*}
			\eta_n \geq \min\left(f_2^n \eta_{init},\eta^*\right).
		\end{equation*}
		As $f_2>1$ there exists $n_0 \geq 0$ such that $ \forall n\geq n_0, f_2^n \eta_{init} \geq \eta^*$. Therefore:
		\begin{equation*}
			\displaystyle\inf_{n\geq n_0}{\eta_n} = \eta^* > 0.
		\end{equation*}
		Combining $\eta_n \|v_{n+1}\| \to 0$ with $\inf \eta_n >0$, we get $v_n \to 0$. \\
		As $(\theta_n, v_n)_{n\in \mathbb{N}}$ is bounded, the Bolzano-Weirstrass theorem gives the existence of a subsequence $\left(\theta_{\phi(n)}, v_{\phi(n)}\right)_{n\in \mathbb{N}}$ that converges to $z^* \coloneqq (\theta^*, v^*)$. First $v^*=0$ since $v_n \to 0$. We will justify that $\theta^*$ is a critical point of $\nR$.
		By the first update rule of Momentum \eqref{Momentum_eq} and the triangle inequality, we get (have in mind that $v_n \to 0$) for $n \geq n_0$:
		\begin{equation*}
			\|\nR(\theta_{\phi(n)})\| \leq \dfrac{\|v_{\phi(n)+1}\|+\beta_{\phi(n)} \|v_{\phi(n)}\|}{\eta_{\phi(n)}} \leq \dfrac{\|v_{\phi(n)+1}\|+(1-\bar{\beta}\eta^*) \|v_{\phi(n)}\|}{\eta_*} \to 0.
		\end{equation*}
		since $\eta_n \geq \eta^*$. By continuity of $\nR$ (consequence of assumption \ref{local_lip}), $\nR(\theta^*)=0$. Without loss of generality, we can assume that $V(z^*)=0$ in the rest of the proof.
	\end{proofpart}
	
	\begin{proofpart}
		In this part, we \textbf{show that the sequence $(\theta_n)$ converges. To do this, we need to focus on the sum $\displaystyle{\sum_{n\geq n_0}} \Delta_{n+1}$} where $\Delta_n \coloneqq \|\theta_{n+1}-\theta_n\|$. \\ 
		From now on, let $n\geq n_0$ and denote by $\mathcal{U}$ the neighborhood of $z^*$ where the Lojasiewicz inequality is satisfied. According to lemma \ref{lemma_delta_momentum}, we can deduce for $z_n \in \mathcal{U}$:
		\begin{equation*}
			\Delta_{n+1} \leq \frac{1}{2}(\Delta_n-\Delta_{n+1})+\dfrac{2A_n}{\lambda\alpha c \bar{\beta} \eta_{n-1} \eta_n}\left[V(z_n)^{\alpha}-V(z_{n+1})^{\alpha}\right].
		\end{equation*}
		First notice that $\dfrac{A_n}{\eta_{n-1}\eta_n}$ is uniformly bounded by $\frac{1}{\eta^*}+(1+\bar{\beta})f_2$:
		\begin{equation*}
			\dfrac{A_n}{\eta_{n-1}\eta_n} \leq 
			\left\{
			\begin{array}{ll}
				\frac{1}{\eta_n} \leq \frac{1}{\eta^*}  \text{ if } A_n=\eta_n, \\
				\dfrac{\eta_n+\beta_n}{\eta_{n-1}} \leq \dfrac{\eta_n+1+\bar{\beta}\eta_n}{\eta_{n-1}} \leq \frac{1}{\eta_{n-1}} + (1+\bar{\beta})\frac{\eta_n}{\eta_{n-1}}
				\text{ else. }
			\end{array}
			\right.
		\end{equation*}
		It remains to note that $\frac{\eta_n}{\eta_{n-1}} \leq f_2$ because we begin the research of the new time step with the learning rate $f_2 \eta_{n-1}$. For simplification, let us denote $A^* \coloneqq \dfrac{2}{\lambda \alpha c \bar{\beta}}\left[\frac{1}{\eta^*}+(1+\bar{\beta})f_2\right]$.\\\\
		If $z_p, \dots , z_{q-1} \in \mathcal{U}$, we can sum the following inequalities from $k=p$ to $q-1$,
		\begin{equation*}
			\Delta_{k+1} \leq \frac{1}{2}(\Delta_k-\Delta_{k+1})+A^*\left[V(z_k)^{\alpha}-V(z_{k+1})^{\alpha}\right],
		\end{equation*}
		and get:
		\begin{equation}
			\displaystyle{\sum_{k=p}^{q-1}} \Delta_{k+1} \leq \dfrac{\Delta_p-\Delta_{q}}{2} + A^* \left[V(z_p)^{\alpha}-V(z_q)^{\alpha}\right].
			\label{momentum_sum_delta}
		\end{equation}
		
		Let $r>0$ be such that $B_r(\theta^*) \subset \mathcal{U}$. Given that $\theta^*$ is a accumulation point of $(\theta_n)_{n\in \mathbb{N}}$ and $(V(z_n))_{n\in \mathbb{N}}$ converges, it exists $n_1 \geq n_0$ such that:
		\begin{equation}
			\begin{array}{lll}
				\|\theta_{n_1}-\theta^*\|<\frac{r}{3}, \\\\
				\forall q \geq n_1: A^*\left[V(z_{n_1})^{\alpha}-V(z_{q})^{\alpha}\right] < \frac{r}{3}, \\\\
				\frac{\Delta_{n_1}}{2} < \frac{r}{3}.
				\label{r_inf}
			\end{array}
		\end{equation}
		
		Let us show that $\theta_n \in B_r(\theta^*)$ for all $n\geq n_1$. By contradiction assume that it is not the case: $\exists n\geq n_1, \theta_n \notin B_r(\theta^*)$. The set $\left\{ n\geq n_1, \theta_n \notin B_r(\theta^*)\right\}$ is a non empty bounded by below part of $\mathbb{N}$. So we can consider the minimum of this set that we denote by $q$. As a result, $\forall n_1 \leq n < q$, $\theta_n \in \mathcal{U}$ so we can apply \eqref{momentum_sum_delta} and \eqref{r_inf}:
		\begin{equation*}
			\displaystyle{\sum_{n=n_1}^{q-1}}\Delta_{n+1} \leq \dfrac{\Delta_{n_1}}{2} + A^* \left[V(z_{n_1})^{\alpha}-V(z_q)^{\alpha}\right] \leq \frac{2r}{3}. 
		\end{equation*}
		This implies:
		\begin{equation*}
			\|\theta_q-\theta^*\| \leq \displaystyle{\sum_{n=n_1}^{q-1}}\Delta_{n+1} + \|\theta_{n_1}-\theta^*\| < r.
		\end{equation*}
		This is a contradiction because $\|\theta_q-\theta^*\| \geq r$. As $r$ is arbitrary small this shows the convergence of $(\theta_n)$.
	\end{proofpart}
	
	\begin{proofpart}
		Now, it remains to \textbf{obtain the convergences rates}.\\ 
		From now on, let $n\geq n_1$. As $\theta_n \in \mathcal{U}$, by applying \eqref{momentum_sum_delta}, we get:
		\begin{equation}
			\|\theta_n-\theta^*\| = \left|\left|\displaystyle{\sum_{k=n}^{+\infty}} (\theta_k-\theta_{k+1})\right|\right| \leq \displaystyle{\sum_{k=n}^{+\infty}} \Delta_{k+1} \leq \dfrac{\Delta_n}{2}+A^*V(z_n)^{\alpha}.
			\label{theta_V}
		\end{equation}
		
		To obtain the convergence speed, we have to derive a Gronwall inequality on $V(z_n)$.
		Let us remember from the proof of lemma \ref{lemma_delta_momentum} that:
		\begin{equation*}
			\|\nabla V(z_n)\| \leq \frac{\Delta_{n+1}}{\eta_n^2}+\frac{1}{\eta_{n-1}}\left(1+\frac{\beta_n}{\eta_n}\right)\Delta_n.
		\end{equation*}
		The Lyapunov inequality \eqref{Momentum_condition} with $\|v_{n+1}\| = \frac{\Delta_{n+1}}{\eta_n}$ trigger the following inequality:
		\begin{equation*}
			V(z_{n+1})-V(z_n) \leq -\lambda \bar{\beta}\eta_n \dfrac{\Delta_{n+1}^2}{\eta_n^2} = -\lambda \bar{\beta} \dfrac{\Delta_{n+1}^2}{\eta_n},
		\end{equation*}
		that we can use in the previous inequality to get:
		\begin{multline*}
			\|\nabla V(z_n)\|^2 \leq \left[\frac{\Delta_{n+1}}{\eta_n^2}+\frac{1}{\eta_{n-1}}\left(1+\frac{\beta_n}{\eta_n}\right)\Delta_n\right]^2 \leq \frac{2\Delta_{n+1}^2}{\eta_n^4}+\frac{2}{\eta_{n-1}^2}\left(1+\frac{\beta_n}{\eta_n}\right)^2\Delta_n^2, \\ 
			\leq \frac{2}{\lambda \bar{\beta}\eta_n^3}\left[V(z_n)-V(z_{n+1})\right]+\frac{2}{\lambda \bar{\beta}\eta_{n-1}}\left(1+\frac{\beta_n}{\eta_n}\right)^2\left[V(z_{n-1})-V(z_n)\right] \\
			\leq c^2B_n\left[V(z_{n-1})-V(z_{n+1})\right].
		\end{multline*}
		where
		\begin{equation*}
			B_n \coloneqq \frac{2}{\lambda \bar{\beta} c^2} \max{\left(\frac{1}{\eta_n^3}, \frac{1}{\eta_{n-1}} \left(1+\frac{\beta_n}{\eta_n}\right)^2\right)}.
		\end{equation*}
		The previous expression of $B_n$ can be written in the form \eqref{expr_Bn} of theorem \ref{momentum_th} by replacing $\beta_n$ by $1-\bar{\beta}\eta_n$. 
		As $z_n \in \mathcal{U}$, the Lojasiewicz inequality gives:
		\begin{equation*}
			c^2 V(z_n)^{2(1-\alpha)} \leq \|\nabla V(z_n)\|^2,
		\end{equation*}
		which leads to:
		\begin{equation*}
			V(z_n)^{2(1-\alpha)} \leq B_n \left[V(z_{n-1})-V(z_{n+1})\right].
		\end{equation*}
		Then we will apply the Gronwall lemma \ref{Gronwall_two_step} with $w_n \coloneqq\frac{1}{B_n}$, $u_n \coloneqq V(z_n)$ and $\gamma \coloneqq 2(1-\alpha)$. If $\alpha=\frac{1}{2}$, for all $n\geq n_1+1$, we get:
		\begin{equation}
			V(z_n) \leq e^{\sqrt{V(z_{n_1}) V(z_{n_1+1})}} \exp{\left(-\frac{1}{2}\displaystyle{\sum_{k=n_1}^{n-1}\frac{1}{B_k}}\right)}.
			\label{momentum_V_exp}
		\end{equation}
		If $0<\alpha\leq \frac{1}{2}$, for all $n\geq n_1+1$, we get:
		\begin{equation}
			V(z_n) \leq \left[\dfrac{2}{V(z_{n_1})^{2\alpha-1}+V(z_{n_1+1})^{2\alpha-1}+(1-2\alpha)\displaystyle{\sum_{k=n_1}^{n-1}\frac{1}{B_k}}}\right]^{\frac{1}{1-2\alpha}}.
			\label{momentum_V_subexp}
		\end{equation}
		If $\frac{1}{2}<\alpha<1$, if $\displaystyle{\sum_{k\geq 0}} \frac{1}{B_k}=+\infty$, then $\left(V(z_n)\right)_{n\in \mathbb{N}}$ is stationary, equal to $V(z^*)=0$. It is indeed the case: $B_n$ involves the term $\eta_{n-1}$ and $\eta_n$ that are lower and upper bounded by positive constants so $B_n$ is upper bounded by a strictly positive constant, see \eqref{expr_Bn}. Then $\inf \frac{1}{B_n}>0$ which is sufficient to conclude that the sum diverges. \\
		
		The Lyapunov inequality \eqref{Momentum_condition} combined with $\|v_n\| = \frac{\Delta_n}{\eta_{n-1}}$ leads directly to:
		\begin{equation}
			\Delta_n \leq \sqrt{\frac{\eta_{n-1}}{\lambda \bar{\beta}}}\sqrt{V(z_{n-1})-V(z_n)} \leq \sqrt{\frac{\eta_{n-1}}{\lambda \bar{\beta}}}\sqrt{V(z_{n-1})} \leq \frac{1}{\bar{\beta}}\sqrt{\frac{1}{\lambda}}\sqrt{V(z_{n-1})}.
			\label{momentum_delta_infV}
		\end{equation}
		since $\eta_{n-1} \leq \frac{1}{\bar{\beta}}$. Combining \eqref{theta_V} with the previous inequality, we get:
		\begin{equation*}
			\|\theta_n-\theta^*\| \leq \dfrac{\Delta_n}{2}+A^*V(z_n)^{\alpha} \leq \frac{1}{\bar{2\beta}}\sqrt{\frac{1}{\lambda}}\sqrt{V(z_{n-1})}+A^*V(z_n)^{\alpha}.
		\end{equation*}
		To obtain the convergences rates for $\alpha \leq \frac{1}{2}$, it is sufficient to inject inequalities \eqref{momentum_V_exp} and \eqref{momentum_V_subexp} in the previous one. If $\frac{1}{2}<\alpha<1$, for $n$ large enough, $V(z_n)=0$ as well as $\Delta_n=0$ by \eqref{momentum_delta_infV}. By the previous inequality, we also conclude that $\theta_n=\theta^*$ for $n$ large enough.
	\end{proofpart}
\end{proof}

\begin{proof}[Proof of corollary \ref{momentum_corollary}]
	The lemma \ref{Momentum_eta_inf} tells us that if one takes $\eta_n = \eta_o$ for all $n\geq 0$, then the dissipation inequality \eqref{Momentum_condition} is satisfied for all the iterations. Then we can just copy the proof above.
\end{proof}

\section{RMSProp proof}
\label{appendix_rms}

In this section, we prove theorem \ref{rms_th} and corollary \ref{rms_corollary}. We will first introduce some lemmas similar to the ones for Momentum. 

In the same manner as Momentum, we begin by proving that the Lyapunov condition \eqref{RMSProp_Lyapunov} can be achieved by a constant step size policy. 

For $\theta,s \in \Rb^N$ and a fix $\eta$, let us introduce the notations $s_{\eta}\coloneqq (1-\bar{\beta}\eta)s+\bar{\beta}\eta \nR(\theta)^2$ and $\theta_{\eta}=\theta - \eta \dfrac{\nR(\theta)}{\epsilon_a+\sqrt{s_{\eta}}}$.
\begin{lemma}
	Assume \ref{c1}, \ref{local_lip} and consider a compact set $K\subset \Rb^N$. Then for all $0<\eta\leq \eta_o$ \eqref{def_eta_star} (with $L$ the Lipshitz constant on $K$) and $\theta \in K$:
	\begin{equation*}
		\R(\theta_{\eta})-\R(\theta) \leq - \lambda \eta \left|\left|\dfrac{\nR(\theta)}{\sqrt{\epsilon_a+\sqrt{s_{\eta}}}}\right|\right|^2.
	\end{equation*}
	\label{RMSProp_eta}
\end{lemma}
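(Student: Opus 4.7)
The plan is to mirror the argument of Lemma \ref{Momentum_eta_inf}: apply the descent lemma to $\R$ along the one-step update, then reduce the resulting inequality to a scalar condition on $\eta$. Since here the Lyapunov function coincides with $\R$ itself (no mechanical energy), the analysis should be strictly simpler than in the Momentum case.

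First, because $\nR$ is locally Lipschitz and $K$ is compact, it is globally Lipschitz on $K$ with some constant $L = L_K$. The descent lemma applied to $y = \theta_\eta$ and $x = \theta$ yields
\begin{equation*}
\R(\theta_\eta) - \R(\theta) \leq \nR(\theta) \cdot (\theta_\eta - \theta) + \frac{L}{2}\|\theta_\eta - \theta\|^2.
\end{equation*}
Substituting $\theta_\eta - \theta = -\eta\,\nR(\theta)/(\epsilon_a + \sqrt{s_\eta})$ and recalling that all vector operations are component-wise, I would rewrite this as
\begin{equation*}
\R(\theta_\eta) - \R(\theta) \leq -\eta \sum_{i=1}^N \frac{\nR(\theta)_i^2}{\epsilon_a + \sqrt{s_{\eta,i}}} + \frac{L\eta^2}{2}\sum_{i=1}^N \frac{\nR(\theta)_i^2}{\bigl(\epsilon_a + \sqrt{s_{\eta,i}}\bigr)^2}.
\end{equation*}

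Next, the target inequality is equivalent to
\begin{equation*}
\sum_{i=1}^N \frac{\nR(\theta)_i^2}{\epsilon_a + \sqrt{s_{\eta,i}}}\left[\frac{L\eta}{2(\epsilon_a + \sqrt{s_{\eta,i}})} - (1-\lambda)\right] \leq 0,
\end{equation*}
so it suffices to ensure, coordinate by coordinate, that each bracket is non-positive. Using the uniform lower bound $\epsilon_a + \sqrt{s_{\eta,i}} \geq \epsilon_a$, valid for every $i$, every $\theta$ and every non-negative $s$, this reduces to the single scalar condition $L\eta/(2\epsilon_a) \leq 1-\lambda$, i.e.\ $\eta \leq 2\epsilon_a(1-\lambda)/L$. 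Combined with the prior requirement $\eta \leq 1/\bar{\beta}$ (needed so that $\beta_n = 1 - \bar{\beta}\eta$ lies in $(0,1)$ and $s_\eta$ remains a convex combination), this is precisely $\eta_o$ as defined in \eqref{def_eta_star}.

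There is essentially no obstacle here: in contrast to Lemma \ref{Momentum_eta_inf}, where the Lyapunov function $V$ coupled $\theta$ and $v$ and forced a joint sign study of the two polynomials $P$ and $Q$ appearing in \eqref{f_polys}, here the preconditioner $\epsilon_a + \sqrt{s_\eta}$ is already bounded below by the constant $\epsilon_a$ independently of the current state, so no further algebra is required. The only mild loss is that replacing $\epsilon_a + \sqrt{s_{\eta,i}}$ by $\epsilon_a$ discards the adaptive information carried by $s_\eta$; this is the price to pay for a lower bound on the admissible step size that is \emph{uniform} in $\theta$ and $s$, which is exactly what the subsequent convergence argument will need to conclude $\inf_n \eta_n > 0$.
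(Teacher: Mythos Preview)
Your argument is correct and matches the paper's proof essentially line for line: both apply the descent lemma on $K$, express the first-order term as $\sum_i \nR(\theta)_i^2/(\epsilon_a+\sqrt{s_{\eta,i}})$, bound the second-order term via $\epsilon_a+\sqrt{s_{\eta,i}}\geq \epsilon_a$, and arrive at the same scalar condition $\eta \leq 2\epsilon_a(1-\lambda)/L$. The only cosmetic difference is that the paper packages the lower bound $\epsilon_a+\sqrt{s_{\eta,i}}\geq \epsilon_a$ as a bound on the largest eigenvalue of the diagonal preconditioner, whereas you work component-wise; the content is identical.
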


\begin{proof}
	As $\nR$ is locally Lipshitz, it is globally Lipshitz on $K$ (denoting by $L$ the lipshitz constant on $K$). By applying the smoothness inequality as in the proof of lemma \ref{Momentum_eta_inf}, we get:
	\begin{multline*}
		\R(\theta_{\eta}) \leq \R(\theta) - \eta \nR(\theta)\cdot \dfrac{\nR(\theta)}{\epsilon_a+\sqrt{s_{\eta}}} + \frac{L\eta^2}{2}\left|\left|\dfrac{\nR(\theta)}{\epsilon_a+\sqrt{s_{\eta}}}\right|\right|^2 \\
		\leq \eta \left[\frac{L\eta}{2}\left|\left|\dfrac{\nR(\theta)}{\epsilon_a+\sqrt{s_{\eta}}}\right|\right|^2-\nR(\theta)\cdot \dfrac{\nR(\theta)}{\epsilon_a+\sqrt{s_{\eta}}}\right]. 
	\end{multline*}
	We can bound the previous norm with the dissipation term (right hand side of the Lyapunov inequality \eqref{RMSProp_Lyapunov}):
	\begin{multline*}
		\left|\left|\dfrac{\nR(\theta)}{\epsilon_a+\sqrt{s_{\eta}}}\right|\right|^2 = \left|\left|\dfrac{1}{\sqrt{\epsilon_a+\sqrt{s_{\eta}}}} \dfrac{\nR(\theta)}{\sqrt{\epsilon_a+\sqrt{s_{\eta}}}}\right|\right|^2 \\
		\leq \lambda_{max}\left(diag\left(\dfrac{1}{\sqrt{\epsilon_a+\sqrt{s_{\eta}}}}\right)\right)^2 \left|\left|\dfrac{\nR(\theta)}{\sqrt{\epsilon_a+\sqrt{s_{\eta}}}}\right|\right|^2 \\ 
		\leq \dfrac{1}{\left[\min_{1\leq i \leq N}\sqrt{\epsilon_a+\sqrt{s_{\eta}}}\right]^2}\left|\left|\dfrac{\nR(\theta)}{\sqrt{\epsilon_a+\sqrt{s_{\eta}}}}\right|\right|^2 \leq \frac{1}{\epsilon_a} \left|\left|\dfrac{\nR(\theta)}{\sqrt{\epsilon_a+\sqrt{s_{\eta}}}}\right|\right|^2.
	\end{multline*}
	In the lines above, $\lambda_{max}(diag(x))$ denotes the biggest eigenvalue of the diagonal matrix with a diagonal given by the vector $x\in \Rb^N$.
	Combining the two points below, we get:
	\begin{equation*}
		\R(\theta_{\eta})-\R(\theta) \leq \eta \left[\frac{L\eta}{2}\frac{1}{\epsilon_a}-1 \right] \left|\left|\dfrac{\nR(\theta)}{\sqrt{\epsilon_a+\sqrt{s_{\eta}}}}\right|\right|^2.
	\end{equation*}
	To satisfy the Lyapunov condition \eqref{RMSProp_Lyapunov} it is sufficient to require that:
	\begin{equation*}
		\frac{L\eta}{2}\frac{1}{\epsilon_a}-1  \leq -\lambda \Leftrightarrow \eta \leq \frac{2\epsilon_a}{L}(1-\lambda).
	\end{equation*}
	The inequality $\eta_o \leq \frac{2\epsilon_a}{L}(1-\lambda)$ concludes the proof.
\end{proof}

Contrary to Momentum, an extra step to bound the sequence $(s_n)_{n\in \mathbb{N}}$ is needed. The next two lemmas are here to bound $(s_n)_{n\in \mathbb{N}}$ for gradient that lies in the unit ball. Let us denote by $s_n^i$ the $i$-th component of the vector $s_n$ for $1\leq i \leq N$.

\begin{lemma}
	For all $n\in \mathbb{N}$:
	\begin{equation*}
		\|\theta_{n+1}-\theta_n\| \leq \sqrt{\frac{\eta_n N}{\bar{\beta}}}.
	\end{equation*}
	\label{lemma_rms_dtheta}
\end{lemma}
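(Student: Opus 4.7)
The plan is to exploit the fact that the denominator $\epsilon_a + \sqrt{s_{n+1}}$ in the RMSProp update already contains, component by component, a term proportional to $|\nabla\mathcal{R}(\theta_n)^i|$ itself, which will cancel the numerator and leave a deterministic bound depending only on $\eta_n$ and $\bar{\beta}$. So the proof is essentially a componentwise calculation; the factor $\sqrt{N}$ arises by Cauchy--Schwarz (equivalently, by summing $N$ coordinatewise bounds).

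Concretely, I would proceed as follows. First, write the update coordinatewise: for each $1\leq i\leq N$,
\begin{equation*}
\theta_{n+1}^i - \theta_n^i = -\eta_n\,\frac{\nabla\mathcal{R}(\theta_n)^i}{\epsilon_a + \sqrt{s_{n+1}^i}}.
\end{equation*}
Second, use the moving average update $s_{n+1}^i = \beta_n s_n^i + (1-\beta_n)\bigl(\nabla\mathcal{R}(\theta_n)^i\bigr)^2$ together with $1-\beta_n = \bar{\beta}\eta_n$ and $s_n^i \geq 0$ (which holds by induction since $s_0=0$) to deduce the lower bound
\begin{equation*}
s_{n+1}^i \;\geq\; \bar{\beta}\eta_n\bigl(\nabla\mathcal{R}(\theta_n)^i\bigr)^2,
\end{equation*}
hence $\epsilon_a + \sqrt{s_{n+1}^i} \geq \sqrt{\bar{\beta}\eta_n}\,\lvert\nabla\mathcal{R}(\theta_n)^i\rvert$. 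Plugging this back into the coordinatewise update gives
\begin{equation*}
\lvert\theta_{n+1}^i - \theta_n^i\rvert \;\leq\; \eta_n\,\frac{\lvert\nabla\mathcal{R}(\theta_n)^i\rvert}{\sqrt{\bar{\beta}\eta_n}\,\lvert\nabla\mathcal{R}(\theta_n)^i\rvert} \;=\; \sqrt{\frac{\eta_n}{\bar{\beta}}},
\end{equation*}
with the convention that the bound holds trivially when $\nabla\mathcal{R}(\theta_n)^i=0$ since the update itself then vanishes.

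Third, I would sum the $N$ squared coordinate bounds to obtain
\begin{equation*}
\|\theta_{n+1}-\theta_n\|^2 \;=\; \sum_{i=1}^N \lvert\theta_{n+1}^i-\theta_n^i\rvert^2 \;\leq\; N\,\frac{\eta_n}{\bar{\beta}},
\end{equation*}
and take square roots. There is no real obstacle: the only mildly delicate point is handling coordinates with vanishing gradient, which is immediate from the fact that those coordinates do not move at all. The estimate is uniform in $s_n$, which is exactly what is needed to avoid the usual \emph{a priori} boundedness assumption on the moving average.
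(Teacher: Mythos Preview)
Your proof is correct and follows essentially the same componentwise argument as the paper: bound $s_{n+1}^i \geq (1-\beta_n)\bigl(\partial_i\mathcal{R}(\theta_n)\bigr)^2 = \bar{\beta}\eta_n\bigl(\partial_i\mathcal{R}(\theta_n)\bigr)^2$, cancel the gradient in numerator and denominator to get $|\theta_{n+1}^i-\theta_n^i|\leq\sqrt{\eta_n/\bar{\beta}}$, then sum the squares over the $N$ coordinates. Your explicit handling of the $\partial_i\mathcal{R}(\theta_n)=0$ case is a nice touch the paper leaves implicit.
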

\begin{proof}
	For all $1 \leq i \leq N$:
	\begin{equation*}
		\sqrt{s_{n+1}^i} \geq \sqrt{\beta_n s_n^i + (1-\beta_n)\partial_i \R(\theta_n)^2} \geq \sqrt{1-\beta_n}|\partial_i \R(\theta_n)|.
	\end{equation*}
	Then we can write:
	\begin{equation*}
		|\theta_{n+1}^i-\theta_n^i| = \dfrac{\eta_n |\partial_i \R(\theta_n)|}{\epsilon_a+\sqrt{s_{n+1}^i}} \leq \dfrac{\eta_n |\partial_i \R(\theta_n)|}{\epsilon_a+\sqrt{1-\beta_n}|\partial_i \R(\theta_n)|} \leq \frac{\eta_n}{\sqrt{1-\beta_n}} \leq \sqrt{\frac{\eta_n}{\bar{\beta}}}.
	\end{equation*}
	By summing up these inequalities over $i$, we get:
	\begin{equation*}
		\|\theta_{n+1}-\theta_n\|^2 \leq \displaystyle{\sum_{i=1}^N}\frac{\eta_n}{\bar{\beta}} \leq \frac{\eta_n}{\bar{\beta}}N. 
	\end{equation*}
	By taking the square root in the previous inequality, we derive the expected bound.
\end{proof}

\begin{lemma}
	Assume \ref{c1} and \ref{local_lip}. If for some $n\geq 0$, $\|\nR(\theta_n)\|_{\infty} \leq 1$, then for all $1 \leq i \leq N$, $\left(\epsilon_a+\sqrt{s_{n+1}^i}\right)^2 \leq S_{n+1} \leq S$. Here $S_n$ and $S$ are given at the beginning of section \ref{analysis_rms}.
	\label{RMSProp_s}
\end{lemma}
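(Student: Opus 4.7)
The strategy is to expand $s_{n+1}^i$ explicitly, control each term pointwise by a Lipschitz argument together with lemma \ref{lemma_rms_dtheta}, and then upper-bound the resulting expression first by $\tilde{S}_{n+1}$ and then by the constant $\tilde{S}$.

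First I would unroll the recursion $s_{m+1}=\beta_m s_m+(1-\beta_m)\nR(\theta_m)^{2}$ starting from $s_0=0$. A routine induction gives, coordinate by coordinate,
\begin{equation*}
s_{n+1}^{i}\;=\;\sum_{j=0}^{n}(1-\beta_{n-j})\Bigl(\prod_{\ell=n-j+1}^{n}\beta_{\ell}\Bigr)(\partial_{i}\R(\theta_{n-j}))^{2}\;=\;\sum_{j=0}^{n}B_{n-j}\,(\partial_{i}\R(\theta_{n-j}))^{2}.
\end{equation*}
Since $(\theta_n)$ lives in the compact $K=\overline{\{\theta_m,\,m\geq 0\}}$ (assumption \ref{coercive} combined with the Lyapunov inequality \eqref{RMSProp_Lyapunov}), assumption \ref{local_lip} provides a global Lipschitz constant $L$ for $\nR$ on $K$. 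Combining this with the hypothesis $\|\nR(\theta_n)\|_\infty\leq 1$ and the triangle inequality applied along the trajectory yields
\begin{equation*}
|\partial_{i}\R(\theta_{n-j})|\;\leq\;|\partial_{i}\R(\theta_n)|+L\sum_{k=0}^{j-1}\|\theta_{n-k}-\theta_{n-k-1}\|\;\leq\;1+L\sqrt{\tfrac{N}{\bar{\beta}}}\sum_{k=0}^{j-1}\sqrt{\eta_{n-k}},
\end{equation*}
where the last step invokes lemma \ref{lemma_rms_dtheta} (up to the reindexing convention used in the definition of $\tilde{S}_{n+1}$).

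Squaring this bound produces a constant term, a linear term in $\sum_{k=0}^{j-1}\sqrt{\eta_{n-k}}$, and a quadratic one; multiplying by $B_{n-j}\geq 0$, summing over $j=0,\dots,n$, and extending the range to $j=\infty$ (which only enlarges the right-hand side) recovers exactly the three contributions defining $\tilde{S}_{n+1}$. Hence $s_{n+1}^{i}\leq\tilde{S}_{n+1}$, and the monotone map $x\mapsto(\epsilon_a+\sqrt{x})^{2}$ delivers the first inequality $(\epsilon_a+\sqrt{s_{n+1}^{i}})^{2}\leq S_{n+1}$.

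For the second inequality $S_{n+1}\leq S$, I would use the a priori bounds $\eta_{\ell}\leq 1/\bar{\beta}$ (by construction) and $\eta_{\ell}\geq \eta^{*}$, the latter coming from the same argument already used for Adaptive Momentum in the proof of theorem \ref{momentum_th}, now fed by lemma \ref{RMSProp_eta}. Indeed $1-\beta_{n-j}=\bar{\beta}\eta_{n-j}\leq 1$ and each $\beta_{\ell}\leq 1-\bar{\beta}\eta^{*}$, so $B_{n-j}\leq(1-\bar{\beta}\eta^{*})^{j}$, and $\sum_{k=0}^{j-1}\sqrt{\eta_{n-k}}\leq j/\sqrt{\bar{\beta}}$. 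Each of the three infinite sums in $\tilde{S}_{n+1}$ is therefore dominated by a geometric-type series $\sum_{j\geq 0}j^{p}x^{j}$ with $x=1-\bar{\beta}\eta^{*}$, whose closed forms $\tfrac{1}{\bar{\beta}\eta^{*}}$, $\tfrac{1-\bar{\beta}\eta^{*}}{(\bar{\beta}\eta^{*})^{2}}$, $\tfrac{(1-\bar{\beta}\eta^{*})(2-\bar{\beta}\eta^{*})}{(\bar{\beta}\eta^{*})^{3}}$ match exactly the terms in the definition of $\tilde{S}$. Applying $x\mapsto(\epsilon_a+\sqrt{x})^{2}$ once more gives $S_{n+1}\leq S$.

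The main obstacle is really just bookkeeping: one has to keep track of two intertwined indices (the products $\prod\beta_{\ell}$ and the partial sums of $\sqrt{\eta_{\ell}}$) through the squaring step and verify that the aggregated constants align precisely with the coefficients that appear in the definitions of $\tilde{S}_{n+1}$ and $\tilde{S}$. No new analytical idea is needed beyond the combination of lemma \ref{lemma_rms_dtheta}, Lipschitz continuity, and the uniform lower bound $\eta_{\ell}\geq\eta^{*}$ already available from the adaptive step-size mechanism.
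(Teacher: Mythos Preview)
Your proposal is correct and follows essentially the same route as the paper: unroll $s_{n+1}^i$ into $\sum_j B_{n-j}(\partial_i\R(\theta_{n-j}))^2$, bound $|\partial_i\R(\theta_{n-j})|$ via the Lipschitz constant on $K$ and lemma~\ref{lemma_rms_dtheta}, square and identify the three pieces of $\tilde S_{n+1}$, then pass to the uniform bound $\tilde S$ using $B_{n-j}\le(1-\bar\beta\eta^*)^j$ and the closed forms of $\sum j^p x^j$. The only cosmetic discrepancy is the index shift $\eta_{n-k}$ versus $\eta_{n-k-1}$, which you already flagged and which is immaterial for the final inequality $S_{n+1}\le S$.
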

\begin{proof}
	By a direct induction, we obtain:
	\begin{equation*}
		s_{n+1}^i = \displaystyle{\sum_{j=0}^n} B_{n-j} \partial_i\R(\theta_{n-j})^2,
	\end{equation*}
	where we recall that for $0\leq j \leq n$:
	\begin{equation*}
		B_{n-j} = (1-\beta_{n-j})\displaystyle{\prod_{l=n-j+1}^n}\beta_l.
	\end{equation*}
	For $j>n$, we consider $B_{n-j}=0$. We have to express $\partial_i\R(\theta_{n-j})$ in function of $\partial_i\R(\theta_n)$. Let us recall that $K=\overline{\left\{\theta_n, n\geq 0\right\}}$ is a compact set and consider the lipshitz constant $L$ of $\nR$ on $K$. By triangle inequality and the Lipshitz property, we have:
	\begin{multline*}
		|\partial_i \R(\theta_{n-j})| \leq \displaystyle{\sum_{k=0}^{j-1}}|\partial_i \R(\theta_{n-k-1}) - \partial_i \R(\theta_{n-k})| + |\partial_i \R(\theta_n)| \\ \leq L\displaystyle{\sum_{k=0}^{j-1}}\|\theta_{n-k-1}-\theta_{n-k}\| + |\partial_i \R(\theta_n)| \leq L\displaystyle{\sum_{k=0}^{j-1}} \sqrt{\frac{\eta_{n-k-1}}{\bar{\beta}}N} + |\partial_i \R(\theta_n)|.
	\end{multline*}
	The last inequality is a direct consequence of lemma \ref{lemma_rms_dtheta}. Combining this inequality with the explicit formula of $s_{n+1}^i$ leads to:
	\begin{equation*}
		s_{n+1}^i \leq \displaystyle{\sum_{j=0}^n}B_{n-j}\left[|\partial_i \R(\theta_n)| + L\displaystyle{\sum_{k=0}^{j-1}} \sqrt{\frac{\eta_{n-k-1}}{\bar{\beta}}N} \right]^2.
	\end{equation*}
	By developing the terms in the brackets, we get:
	\begin{multline}
		s_{n+1}^i \leq \partial_i \R(\theta_n)^2 \displaystyle{\sum_{j=0}^\infty}B_{n-j} + 2L\sqrt{\frac{N}{\bar{\beta}}} |\partial_i \R(\theta_n)| \displaystyle{\sum_{j=0}^\infty}B_{n-j}\left(\displaystyle{\sum_{k=0}^{j-1}}\sqrt{\eta_{n-k-1}}\right) \\ 
		+ \frac{L^2N}{\bar{\beta}} \displaystyle{\sum_{j=0}^\infty}B_{n-j}\left(\displaystyle{\sum_{k=0}^{j-1}} \sqrt{\eta_{n-k-1}}\right)^2,
		\label{bound_sn}
	\end{multline}
	provided that the sums below converge (this will be shown later).
	As $\|\nR(\theta_n)\|_{\infty} \leq 1$, $s_{n+1}^i \leq \tilde{S}_{n+1}$. 
	This shows the first part of the lemma, that is to say, $\left(\epsilon+\sqrt{s_{n+1}^i}\right)^2 \leq S_{n+1}$.\\
	
	Reasoning in the same manner as part 1 of the proof of theorem \ref{momentum_th}, we get $n_0\geq 0$ such that $\displaystyle{\inf_{n\geq n_0}} \eta_n = \eta^* >0$, by using lemma \ref{RMSProp_eta} instead of \ref{Momentum_eta_inf}. Without loss of generality, we can assume $n_0=0$ to avoid introduce new notations. As for all $n\geq 0$, $\eta^* \leq \eta_n \leq \frac{1}{\bar{\beta}}$ the following holds for $0\leq j \leq n$:
	\begin{equation}
		B_{n-j} \leq \left(1-\bar{\beta}\eta^*\right)^j.
		\label{Bn_ineq}
	\end{equation}
	Let us note that $0 \leq 1-\bar{\beta}\eta^* < 1$. Given that the sum $\displaystyle{\sum_{n\geq 0}} x^n$ and its derivatives converge for $|x|<1$, the sums involved above converge. Let us recall the following classical sums for $|x|<1$:
	\begin{equation*}
		\begin{array}{ll}
			\displaystyle{\sum_{j\geq 0}{x^j}} = \dfrac{1}{1-x}, \\
			\displaystyle{\sum_{j\geq 0}{j x^j}} = \dfrac{x}{(1-x)^2}, \\
			\displaystyle{\sum_{j\geq 0}{j^2 x^j}} = \dfrac{x(1+x)}{(1-x)^3}
		\end{array}
	\end{equation*}
	Using these identities with $x=1-\bar{\beta}\eta^*$ and \eqref{Bn_ineq}, we get:
	\begin{equation*}
		\begin{array}{ll}
			\displaystyle{\sum_{j=0}^\infty}B_{n-j} \leq \displaystyle{\sum_{j=0}^\infty}(1-\bar{\beta}\eta^*)^j = \frac{1}{\bar{\beta}\eta^*}, \\
			2L\sqrt{\frac{N}{\bar{\beta}}} \displaystyle{\sum_{j=0}^\infty}B_{n-j}\left(\displaystyle{\sum_{k=0}^{j-1}}\sqrt{\eta_{n-k-1}}\right) \leq \frac{2L\sqrt{N}}{\bar{\beta}} \displaystyle{\sum_{j=0}^\infty}j(1-\bar{\beta}\eta^*)^j = \dfrac{2L\sqrt{N}(1-\bar{\beta}\eta^*)}{\bar{\beta}^3(\eta^*)^2}, \\
			\frac{L^2N}{\bar{\beta}} \displaystyle{\sum_{j=0}^\infty}B_{n-j}\left(\displaystyle{\sum_{k=0}^{j-1}} \sqrt{\eta_{n-k-1}}\right)^2 \leq \frac{L^2N}{\bar{\beta}^2} \displaystyle{\sum_{j=0}^\infty}j^2(1-\bar{\beta}\eta^*)^j \leq \dfrac{L^2N (1-\bar{\beta}\eta^*)(2-\bar{\beta}\eta^*)}{\bar{\beta}^5 (\eta^*)^3}. 
		\end{array}
	\end{equation*}
	To finish the proof, it is sufficient to inject the three inequalities above in the expression of $\tilde{S}_{n+1}$.
\end{proof}

\begin{remark}
	Adapting the proof above, we can show the following extension.
	If for some $M>0$ and $n\geq 0$, $\|\nR(\theta_n)\|_{\infty} \leq M$, then it exists $S_M>0$ such that: for all $1 \leq i \leq N$, $\left(\epsilon_a+\sqrt{s_{n+1}^i}\right)^2 \leq S_M$. To do this it is sufficient to replace $|\partial_i \R(\theta_n)|$ by $M$ in the inequality \eqref{bound_sn}.
	\label{bound_s_general}
\end{remark}

As for Momentum, we need a discrete Gronwall lemma but for only one step sequence.

\begin{lemma}[Gronwall inequality]
	Let $(w_n)_{n\in \mathbb{N}}$ be a positive sequence and $(u_n)_{n\in \mathbb{N}}$ a sequence that satisfies:
	\begin{equation}
		u_{n+1}-u_n \leq -w_n u_n.
	\end{equation}
	Then, the following estimate holds:
	\begin{equation}
		u_n \leq u_0 \exp{\left( -\displaystyle{\sum_{k=0}^{n-1}} w_k \right)}.
	\end{equation}
	\label{gronwall_exp}
\end{lemma}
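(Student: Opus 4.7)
The plan is to treat this as a one-line telescoping induction, using the scalar inequality $1-x\le e^{-x}$ (valid for every real $x$). From the hypothesis one rewrites
\begin{equation*}
    u_{n+1}\le(1-w_n)u_n,
\end{equation*}
which is an affine recursion. Since $w_k\ge 0$, applying $1-w_k\le e^{-w_k}$ and iterating back to $u_0$ should produce the product $\prod_{k=0}^{n-1}(1-w_k)\le \prod_{k=0}^{n-1}e^{-w_k}=\exp\bigl(-\sum_{k=0}^{n-1}w_k\bigr)$.

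More precisely, I would proceed by induction on $n$. The base case $n=0$ is the trivial identity $u_0\le u_0$. For the inductive step, assuming $u_n\le u_0\exp\bigl(-\sum_{k=0}^{n-1}w_k\bigr)$ and $u_n\ge 0$ (which is the case in the intended application, where $u_n$ will be $\R(\theta_n)-\R(\theta^*)$ or $V(z_n)$, both nonnegative), I would multiply the bound $u_{n+1}\le(1-w_n)u_n$ by the elementary estimate $1-w_n\le e^{-w_n}$ to get
\begin{equation*}
    u_{n+1}\le e^{-w_n}u_n\le u_0\exp\!\left(-\sum_{k=0}^{n}w_k\right),
\end{equation*}
closing the induction.

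There is essentially no obstacle: the only subtlety worth flagging is the positivity needed to preserve the direction of the inequality when passing from $(1-w_n)u_n$ to $e^{-w_n}u_n$. If $w_n\le 1$ this is automatic from $u_n\ge 0$; if $w_n>1$ one still has $0\le e^{-w_n}u_n$ while $(1-w_n)u_n\le 0$, so the bound remains valid. Hence the induction goes through without any restriction on the magnitude of $w_n$, and the statement follows in a few lines.
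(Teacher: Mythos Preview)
Your argument is correct and is precisely the standard two-line proof of this discrete Gronwall inequality; the paper does not actually give a proof and simply refers to a textbook. You are right to flag that nonnegativity of $u_n$ is needed for the step $(1-w_n)u_n\le e^{-w_n}u_n$: indeed the lemma as stated is false without it (take $u_0=1$, $u_1=-1$, $u_2=1$ with $w_0=w_1=2$; the recursion holds but $u_2=1>e^{-4}$). In every application in the paper one has $u_n=\R(\theta_n)-\R(\theta^*)\ge 0$ for all $n$, so the extra hypothesis you add is exactly the right fix and your induction closes.
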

\begin{proof}
	It is a classical lemma in numerical and stability analysis, see chapter 1 (section 1.9) of \cite{gronwall_stability}.
\end{proof}

\begin{lemma}
	Let $(w_n)_{n\in \mathbb{N}}$ be a positive sequence, $0<\gamma\neq 1$ and $(u_n)_{n\in \mathbb{N}}$ a lower bounded sequence satisfying:
	\begin{equation}
		u_{n+1}-u_n \leq -w_n u_n^{\gamma}.
	\end{equation}
	If $\gamma>1$, we have:
	\begin{equation}
		u_n \leq \dfrac{1}{\left[ u_0^{1-\gamma} +(\gamma-1) \displaystyle{\sum_{k=0}^{n-1}} w_k \right]^{\frac{1}{\gamma-1}}}.
	\end{equation}
	If $0<\gamma<1$ and $\displaystyle{\sum_{k\geq 0} w_k}$ diverges, $(u_n)_{n\in \mathbb{N}}$ is stationary.
	\label{gronwall_power}
\end{lemma}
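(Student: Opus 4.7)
The proof should mirror the structure of lemma \ref{Gronwall_two_step}, but with telescoping on consecutive indices rather than every second index, and with the nonlinear term $u_n^\gamma$ handled by the mean value theorem applied to $x\mapsto x^{1-\gamma}$. First I would note that the hypothesis forces $u_{n+1}\leq u_n$ whenever $u_n\geq 0$, so the sequence is decreasing; combined with the lower bound, it converges to some limit $l=\inf_n u_n$.

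The core computation is the following: for consecutive terms, there exists $c_n\in[u_{n+1},u_n]$ with
\begin{equation*}
\frac{u_{n+1}^{1-\gamma}-u_n^{1-\gamma}}{1-\gamma} \;=\; \int_{u_n}^{u_{n+1}}\frac{dx}{x^\gamma} \;=\; \frac{u_{n+1}-u_n}{c_n^\gamma}.
\end{equation*}
Since $c_n\leq u_n$ and $u_{n+1}-u_n\leq 0$, bounding $1/c_n^\gamma$ from below by $1/u_n^\gamma$ and using the hypothesis yields $(u_{n+1}^{1-\gamma}-u_n^{1-\gamma})/(1-\gamma)\leq -w_n$. The sign of $1-\gamma$ then dictates the direction of the inequality after summation. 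For $\gamma>1$, multiplying by $1-\gamma<0$ flips the inequality, and telescoping from $0$ to $n-1$ gives a lower bound on $u_n^{1-\gamma}$; raising to the power $1/(1-\gamma)<0$ flips the direction once more and produces exactly the stated upper bound on $u_n$.

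For $0<\gamma<1$ the same telescoping yields an \emph{upper} bound $u_n^{1-\gamma}\leq u_0^{1-\gamma}-(1-\gamma)\sum_{k=0}^{n-1}w_k$, and this is where I would reproduce the argument already used at the end of the proof of lemma \ref{Gronwall_two_step}. If $\sum w_k$ diverges, one can choose $n'$ large enough so that the right-hand side drops below $l^{1-\gamma}$; since $1-\gamma>0$, this translates into $u_n\leq l$ for all $n\geq n'$, and combined with $u_n\geq l$ we conclude $u_n=l$, i.e., stationarity.

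The only subtle point, and the one requiring the most care, is the correct handling of the sign of $1-\gamma$ when passing from the inequality on $u_n^{1-\gamma}$ back to an inequality on $u_n$: both the telescoping step and the final inversion depend on this sign, and writing the bound in the form $1/[\cdots]^{1/(\gamma-1)}$ (as in the statement) rather than $[\cdots]^{1/(1-\gamma)}$ is essentially bookkeeping to keep the expression inside the brackets positive for all $n$. Everything else is a direct transcription of the computation in lemma \ref{Gronwall_two_step}, with the two-step telescoping simplified to a one-step one.
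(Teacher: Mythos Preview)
Your proposal is correct and follows exactly the approach the paper intends: the paper's own proof simply says ``Exactly the same proof as the two step Gronwall lemma \ref{Gronwall_two_step} since $(u_n)_{n\in\mathbb{N}}$ is decreasing by the inequality $u_{n+1}-u_n\leq -w_n u_n^{\gamma}\leq 0$,'' and you have carried out precisely that adaptation, replacing the two-step telescoping by a one-step one and handling the sign of $1-\gamma$ correctly in both regimes.
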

\begin{proof}
	Exactly the same proof as the two step Gronwall lemma \ref{Gronwall_two_step} since $(u_n)_{n\in \mathbb{N}}$ is decreasing by the inequality $u_{n+1}-u_n \leq -w_n u_n^{\alpha} \leq 0$.
\end{proof}

\begin{proof}[Proof of theorem \ref{rms_th}]
	This proof is organized in three parts. In the first one, we show that $\left(\nR(\theta_n)\right)_{n\in \mathbb{N}}$ converges to $0$. This makes it possible to extract a subsequence $\left(\theta_{\phi(n)}\right)_{n\in \mathbb{N}}$ that converges to a critical point $\theta^*$. Then in the second part, we establish an essential inequality similar to the lemma \ref{lemma_delta_momentum}. concerning the distance between two successive iterates, using lemma \ref{RMSProp_s}. This enables to prove that the sequence $(\theta_n)_{n\in \mathbb{N}}$ can not leave an arbitrarily small neighborhood around $\theta^*$ as soon as the subsequence $\left(\theta_{\phi(n)}\right)_{n\in \mathbb{N}}$ enters this neighborhood. This will lead to the convergence of $(\theta_n)_{n\in \mathbb{N}}$ and conclude the second part. Combining the Lyapunov inequality \eqref{RMSProp_Lyapunov} with the lemma \ref{RMSProp_s}, we establish a Gronwall type inequality on $\R(\theta_n)$. Then lemma \ref{gronwall_power} gives convergence rates on $\R(\theta_n)$. Combining this with the central inequality proved in part 2, the convergence rates of theorem \ref{rms_th} are derived.
	
	\begin{proofpart}
		We already know that $(\theta_n)_{n\in \mathbb{N}}$ is bounded thanks to the assumption \ref{coercive} (see the discussion at the beginning of section \ref{analysis_rms}). In this part, we will prove that \textbf{this sequence admits a subsequence that converges to a critical point $\theta^*$ of $\R$}. To do this, we need to prove that $\left(\nR(\theta_n)\right)_{n\in \mathbb{N}}$ converges to $0$. \\
		As the sequence $\left(\R(\theta_n)\right)_{n\in \mathbb{N}}$ is decreasing due to the dissipation inequality \eqref{RMSProp_Lyapunov} and bounded by below, it converges. By passing to the limit in inequality \eqref{RMSProp_Lyapunov}, we get:
		\begin{equation*}
			\displaystyle{\lim_{n\to \infty}} \eta_n \left|\left|\dfrac{\nR(\theta_n)}{\sqrt{\epsilon_a+\sqrt{s_{n+1}}}}\right|\right|^2 =0.
		\end{equation*}
		As $\inf \eta_n >0$ (see the proof of lemma \ref{RMSProp_s}), we can write:
		\begin{equation*}
			\displaystyle{\lim_{n\to \infty}} \left|\left|\dfrac{\nR(\theta_n)}{\sqrt{\epsilon_a+\sqrt{s_{n+1}}}}\right|\right|^2 =0.
		\end{equation*}
		The sequence $(\nR(\theta_n))_{n\in \mathbb{N}}$ is bounded by some $M>0$ (since $(\theta_n)_{n\in \mathbb{N}}$ is bounded) so by remark \ref{bound_s_general} it exists $S_M>0$ such that for all $n\geq 0$ and $1 \leq i \leq N$: $\epsilon_a+\sqrt{s_{n+1}} \leq \sqrt{S_M}$. The following inequality:
		\begin{equation*}
			\left|\left|\dfrac{\nR(\theta_n)}{\sqrt{\epsilon_a+\sqrt{s_{n+1}}}}\right|\right|^2 \geq \dfrac{\|\nR(\theta_n)\|^2}{\sqrt{S_M}},
		\end{equation*}
		gives $\nR(\theta_n) \to 0$. 
		As $(\theta_n)_{n\in \mathbb{N}}$ is bounded, it exists a subsequence $(\theta_{\phi(n)})_{n\in \mathbb{N}}$ that converges to a point $\theta^* \in \Rb^N$. By continuity of $\nR$ (consequence of assumption \ref{local_lip}) and the fact that $\nR(\theta_n) \to 0$, $\nR(\theta^*)=0$ is satisfied. 
	\end{proofpart}
	
	\begin{proofpart}
		In this part, we show that the \textbf{sequence $(\theta_n)_{n\in \mathbb{N}}$ converges. To do this, we need to focus on the sum $\displaystyle{\sum_{n\geq n_0}}\Delta_{n+1}$} where $\Delta_{n+1}\coloneqq \|\theta_{n+1}-\theta_n\|$. Let us denote by $\mathcal{U}$ the neighborhood of $\theta^*$ where the Lojasiewicz inequality is valid. Without loss of generality, we will assume $\R(\theta^*)=0$. \\\\
		We get the following inequality in the same way as in the proof of Momentum:
		\begin{equation}
			\R(\theta_n)^{\alpha}-\R(\theta_{n+1})^{\alpha} \geq \dfrac{\alpha c}{\|\nR(\theta_n)\|} \left[\R(\theta_n)-\R(\theta_{n+1})\right]
			\label{R_alpha}
		\end{equation}
		for $\theta_n \in \mathcal{U}$. Next, we have to give a lower bound for $\left[\R(\theta_n)-\R(\theta_{n+1})\right]$ and an upper bound for the gradient. By the update rule \eqref{RMSProp_eq} and the Lyapunov condition \eqref{RMSProp_Lyapunov} respectively, we can write:
		\begin{equation*}
			\begin{array}{ll}
				\nR(\theta_n) = \dfrac{\epsilon_a+\sqrt{s_{n+1}}}{\eta_n}(\theta_n-\theta_{n+1})  \\
				\R(\theta_n)-\R(\theta_{n+1}) \geq \lambda \eta_n \displaystyle{\sum_{i=1}^N} \dfrac{\partial_i \R(\theta_n)^2}{\epsilon_a+\sqrt{s_{n+1}^i}} \geq \lambda \displaystyle{\sum_{i=1}^N} \dfrac{\epsilon_a+\sqrt{s_{n+1}^i}}{\eta_n}(\theta_{n+1}^i-\theta_n^i)^2 \geq \dfrac{\lambda \epsilon_a}{\eta_n}\Delta_{n+1}^2.
			\end{array}
		\end{equation*}
		From lemma \ref{RMSProp_s}, we can deduce that for $\theta_n \in \mathcal{V}\coloneqq \mathcal{U} \cap \{\theta \in \Rb^N, \|\nR(\theta)\|_{\infty}<1\}$:
		\begin{equation*}
			\|\nR(\theta_n)\| \leq \frac{\sqrt{S_{n+1}}}{\eta_n}\Delta_{n+1}.
		\end{equation*}
		Injecting the two last inequalities above in \eqref{R_alpha}, we get for $\theta_n \in \mathcal{V}$:
		\begin{equation*}
			\Delta_{n+1} \leq \dfrac{\sqrt{S_{n+1}}}{\lambda \alpha c \epsilon_a} \left[\R(\theta_n)^{\alpha}-\R(\theta_{n+1})^{\alpha}\right] \leq \dfrac{\sqrt{S}}{\lambda \alpha c \epsilon_a} \left[\R(\theta_n)^{\alpha}-\R(\theta_{n+1})^{\alpha}\right].
		\end{equation*}
		By summing the previous inequality from $k=p$ to $k=q-1$, we get for $\theta_p, \dots, \theta_{q-1} \in \mathcal{V}$:
		\begin{equation}
			\displaystyle{\sum_{k=p}^{q-1}} \Delta_k \leq \dfrac{\sqrt{S}}{\lambda \alpha c \epsilon_a} \left[\R(\theta_p)^{\alpha}-\R(\theta_{q})^{\alpha}\right].
			\label{rms_sum_delta}
		\end{equation}
		
		Let $r>0$ be such that $B_r(\theta^*) \subset \mathcal{V}$. Given that $\theta^*$ is a accumulation point of $(\theta_n)_{n\in \mathbb{N}}$ and $(\R(\theta_n))_{n\in \mathbb{N}}$ converges, it exists $n_1 \geq 0$ such that:
		\begin{equation}
			\begin{array}{lll}
				\|\theta_{n_1}-\theta^*\|<\frac{r}{2}, \\\\
				\forall q \geq n_1: \dfrac{\sqrt{S}}{\lambda \alpha c \epsilon_a} \left[\R(\theta_{n_1})^{\alpha}-\R(\theta_{q})^{\alpha}\right] < \frac{r}{2}, \\\\
			\end{array}
			\label{rms_r_cv}
		\end{equation}
		
		Show that $\theta_n \in B_r(\theta^*)$ for all $n\geq n_1$. By contradiction assume that it is not the case: $\exists n\geq n_1, \theta_n \notin B_r(\theta^*)$. The set $\left\{ n\geq n_1, \theta_n \notin B_r(\theta^*)\right\}$ is a non empty bounded by below part of $\mathbb{N}$. So we can consider the minimum of this set that we denote by $q$. As a result, $\forall n_1 \leq n < q$, $\theta_n \in V$ so we can apply \eqref{rms_sum_delta} with \eqref{rms_r_cv} to get:
		\begin{equation*}
			\displaystyle{\sum_{n=n_1}^{q-1}}\Delta_{n} \leq \dfrac{\sqrt{S}}{\lambda \alpha c \epsilon} \left[\R(\theta_{n_1})^{\alpha}-\R(\theta_{q})^{\alpha}\right] \leq \frac{r}{2}. 
		\end{equation*}
		This implies:
		\begin{equation*}
			\|\theta_q-\theta^*\| \leq \displaystyle{\sum_{n=n_1}^{q-1}}\Delta_{n+1} + \|\theta_{n_1}-\theta^*\| < r.
		\end{equation*}
		This is a contradiction because $\|\theta_q-\theta^*\| \geq r$. As $r$ is arbitrary small this shows the convergence of $(\theta_n)_{n\in \mathbb{N}}$.
	\end{proofpart}
	
	\begin{proofpart}
		To get an \textbf{estimation of the convergence rate}, we are looking for a relation between the gradient and the difference $\left[\R(\theta_{n+1})-\R(\theta_n)\right]$. \\\\
		Using the dissipation inequality \eqref{RMSProp_Lyapunov} with the lemma \ref{RMSProp_s}, we have for $n\geq n_1$ (as $\theta_n \in \mathcal{V}$):
		\begin{equation*}
			\R(\theta_n)-\R(\theta_{n+1}) \geq \lambda \eta_n \displaystyle{\sum_{i=1}^N} \frac{\partial_i \R(\theta_n)^2}{\epsilon_a+\sqrt{s_{n+1}^i}}  \geq \dfrac{\lambda \eta_n}{\sqrt{S_{n+1}}} \|\nR(\theta_n)\|^2.
		\end{equation*}
		For $n\geq n_1$, we can apply Lojasiewicz inequality (as $\mathcal{V} \subset \mathcal{U}$) to obtain:
		\begin{equation*}
			\R(\theta_n)-\R(\theta_{n+1}) \geq \dfrac{\lambda c^2 \eta_n}{\sqrt{S_{n+1}}} \R(\theta_n)^{2(1-\alpha)}.
		\end{equation*}
		Now we apply lemmas \ref{gronwall_exp} and \ref{gronwall_power} on the previous inequality with $u_n\coloneqq\R(\theta_n)$, $w_n\coloneqq\dfrac{\lambda c^2 \eta_n}{\sqrt{S_{n+1}}}$ and $\gamma\coloneqq 2(1-\alpha)$. This gives:
		\begin{itemize}
			\item if $\alpha=\frac{1}{2}$ and $n\geq n_1$, we have:
			\begin{equation*}
				\R(\theta_n) \leq \R(\theta_{n_1}) \exp{\left( -\displaystyle{\sum_{k=n_1}^{n-1}} \dfrac{\lambda c^2 \eta_k}{\sqrt{S_{k+1}}} \right)}.
			\end{equation*}
			\item If $0<\alpha<\frac{1}{2}$ and $n\geq n_1$, we get:
			\begin{equation*}
				\R(\theta_n) \leq \dfrac{1}{\left[ \R(\theta_0)^{1-\alpha} +(\alpha-1) \displaystyle{\sum_{k=0}^{n-1}} \dfrac{\lambda c^2 \eta_k}{\sqrt{S_{k+1}}} \right]^{\frac{1}{\alpha-1}}}.
			\end{equation*}
			\item If $\frac{1}{2}<\alpha<1$, then $\left(\R(\theta_n)\right)_{n\in \mathbb{N}}$ is stationary equal to $\R(\theta^*)=0$ since the sum $\displaystyle{\sum_{k\geq 0}} \dfrac{\eta_k}{\sqrt{S_{k+1}}} = +\infty$. Indeed, for $n \geq n_1$, $\dfrac{\eta_n}{\sqrt{S_{n+1}}} \geq \dfrac{\eta^*}{\sqrt{S}}>0$.
		\end{itemize}
		We finish the proof by combining the three cases above with the following inequality for $n\geq n_1$ (use \eqref{rms_sum_delta} for the second inequality):
		\begin{equation*}
			\|\theta_n-\theta^*\| \leq \displaystyle{\sum_{k=n}^{+\infty}} \Delta_{k+1} \leq \dfrac{\sqrt{S}}{\lambda \alpha c \epsilon_a} \R(\theta_n)^{\alpha}.
		\end{equation*}
	\end{proofpart}
	
\end{proof}

\section{Complexity proof}
\label{appendix_complexity}

\begin{proof}[Proof of theorem \ref{complexity_theorem}]
	Using the notations of lemma \ref{Momentum_eta_inf} and \ref{RMSProp_eta}, we define for Momentum and RMSProp respectively:
	\begin{equation*}
		f(\theta,\eta) = V(\theta_{\eta},v_{\eta})-V(\theta,v)+\lambda \bar{\beta}\|v_{\eta}\|^2. 
	\end{equation*}
	\begin{equation*}
		f(\theta,\eta) = \R(\theta_{\eta})-\R(\theta)+\lambda \eta \left|\left|\dfrac{\nR(\theta)}{\sqrt{\epsilon_a+\sqrt{s_{\eta}}}}\right|\right|^2. 
	\end{equation*}
	The learning rates chosen by the two algorithms satisfy:
	\begin{equation}
		\begin{array}{cc}
			\eta_0 = \dfrac{1}{\bar{\beta}f_1^{p_0}}, \\
			\eta_n = \dfrac{\min{\left(f_2 \eta_{n-1}, 1/\bar{\beta}\right)}}{f_1^{p_n}} \text{ for } n\geq 1, 
		\end{array}
		\label{eta_n_formel}
	\end{equation}
	where:
	\begin{equation*}
		\begin{array}{cc}
			p_0 = \min \left\{p\geq 0, f\left(\theta_n, \frac{1}{\bar{\beta}f_1^p} \right) \leq 0\right\}, \\
			p_n = \min \left\{p\geq 0, f\left(\theta_n, \dfrac{\min{\left(f_2 \eta_{n-1}, 1/\bar{\beta}\right)}}{f_1^p}\right) \leq 0\right\} \text{ for } n \geq 1.
		\end{array}
	\end{equation*}
	By applying the log function to \eqref{eta_n_formel}, it gives immediately:
	\begin{equation*}
		\begin{array}{cc}
			p_0 = \dfrac{\log{\left(\frac{1}{\bar{\beta}\eta_0}\right)}}{\log{(f_1)}}, \\
			p_n \leq  \dfrac{\log{(f_2)}}{\log{(f_1)}} + \dfrac{1}{\log{(f_1)}} \left[\log{(\eta_{n-1})}-\log{(\eta_n)}\right].
		\end{array}
	\end{equation*}
	We have one evaluation to begin the algorithm $\left(\R(\theta_0) \text{ or } V(\theta_0)\right)$ and at the $k$-th ($k\geq 0$) iteration we evaluate $f(\theta_k, f_2\eta_{k-1})$ until $f\left(\theta_k, \frac{f_2\eta_{k-1}}{f_1^{p_k}}\right)$ which corresponds to $p_k+1$ function evaluations. Then a telescopic summation leads to:
	\begin{equation*}
		C_n = 1+ (1+p_0) + \displaystyle{\sum_{k=1}^n} (p_k+1) \leq 2 + \dfrac{\log{\left(\frac{1}{\bar{\beta}\eta_0}\right)}}{\log{(f_1)}} + \left[1+\dfrac{\log{(f_2)}}{\log{(f_1)}}\right]n + \dfrac{1}{\log{(f_1)}} \left[\log{(\eta_0)}-\log{(\eta_n)}\right].
	\end{equation*}
	We know from the proof of theorems \ref{momentum_th} and \ref{rms_th} that for $n$ big enough, $\eta_n \geq \eta^*$.
	\begin{multline*}
		C_n \leq 2 + \dfrac{\log{\left(\frac{1}{\bar{\beta}\eta_0}\right)}}{\log{(f_1)}} + \left[1+\dfrac{\log{(f_2)}}{\log{(f_1)}}\right]n + \dfrac{1}{\log{(f_1)}} \left[\log{(\eta_0)}-\log{(\eta^*)}\right] \\
		= \left[1+\dfrac{\log{(f_2)}}{\log{(f_1)}}\right]n + \dfrac{\log{\left(\frac{f_1^2}{\bar{\beta}\eta^*}\right)}}{\log{(f_1)}}.
	\end{multline*}
	By dividing by $n$, we get the result.
\end{proof}

\begin{remark}
	We really think that the fact that $\eta^*$ (and so the stiffness and the hyperparameters) only appears in the deviation term is due to the memory effect, that is to say the fact that the backtracking uses the previous time step value to begin a new linesearch. Then it is a relevant question to know if one can establish the same result than theorem \ref{complexity_theorem} for the classical backtracking \cite{armijo}.  
\end{remark}
				
\end{document}